\setlist{leftmargin=7ex}
\setlist[enumerate,1]{label={\normalfont\arabic*.}, ref={\normalfont\arabic*},  itemsep=-2pt, topsep=3pt
}
\newtheorem{cor}{Corollary}
\newtheorem{conjecture}{Conjecture}
\newtheorem{prop}{Proposition}
\newtheorem{rem}{Remark}
\newtheorem{theorem}{Theorem}
\newtheorem{lemen}{Lemma}
\newtheorem{dfn}{Definition}
\begin{document}	
\title{On $\overrightarrow{C_{n}}$-irregular oriented graphs}
\date{}
\author{
	Tatiana Dovzhenok\thanks{Research laboratory ``Mathematics of hybrid intelligence systems'',
		Francisk Skorina Gomel State University,  Gomel, 246028 Belarus. E-mail: \texttt{t.dovzhenok@mail.ru}} 
	\and 	
	Ilya Lukashenko\thanks{Research laboratory ``Algebra and geometry of complex systems'',
		Francisk Skorina Gomel State University,  Gomel, 246028 Belarus. E-mails: \texttt{i.d.lukashenko@gmail.com, yahorfiliuta@gmail.com}} 
	\and 	
	Yahor Filiuta\footnotemark[2]
}
\maketitle
	
\begin{abstract}	
	Let $F$ and $G$ be simple finite oriented graphs (without symmetric arcs). A graph $G$ is called $F$-irregular if any two distinct vertices in $G$ belong to a different number of subgraphs of $G$ isomorphic to $F$. In this paper, we investigate the problem of the existence of $\overrightarrow{C_n}$-irregular graphs, where $\overrightarrow{C_n}$ is an oriented cycle of order $n$ (a strongly connected oriented graph that is formed from a simple undirected cycle $C_n$ on $n$ vertices by orienting each of its edges).
	For every integer $n \ge 3$, we prove that there exists an infinite family of $\overrightarrow{C_n}$-irregular graphs. In addition, we show that the order of a non-trivial $\overrightarrow{C_3}$-irregular graph can be any integer not less than $10$ and no others. We also construct $\overrightarrow{C_4}$-irregular graphs of any order at least $7$ and prove that there are no non-trivial $\overrightarrow{C_4}$-irregular graphs of order less than $7$. \smallskip
		
	\textbf{Keywords}: strong conjecture about $F$-irregular oriented graphs, oriented cycle on $n$ vertices $\big(\overrightarrow{C_n}\big)$,
	$\overrightarrow{C_n}$-degree of a vertex, $\overrightarrow{C_n}$-irregular graph \
	\end{abstract}
	
	{\bf MSC:} 05C07, 05C20, 05C35, 05C38
	
	\section{Introduction}
	
	In 1987, Chartrand, Holbert, Oellermann and Swart \cite{1} generalized the classical concept of vertex degree and introduced a new class of graphs: $F$\emph{-irregular graphs}.
	\begin{dfn} Let $F$ and $G$ be graphs. The $F$-degree of a vertex $v$ in $G$, denoted as $F\deg_G(v)$, is the number of subgraphs of $G$ that are isomorphic to $F$ and contain $v$. A graph $G$ is called $F$-irregular if the $F$-degrees of all its vertices are pairwise distinct.\end{dfn}

	For example, let $F$ be a complete graph $K_3$ of order $3$. The $K_3$-irregular graph $D_8$ of order~$8$ is depicted in Fig.~\ref{fig1}. Indeed, any vertex $i$ in $D_8$ belongs to exactly $i$ subgraphs of $D_8$ that are isomorphic to $K_3$, i.e., $K_3\deg_{D_8}(i) = i$, and hence the $K_3$-degrees of all vertices in $D_8$ are distinct.
	
	The concept of $F$-irregular graphs arose from an attempt to construct irregular graphs that would be the opposite of regular graphs.
	However, in any non-trivial graph (of order $2$ or greater), there are two vertices with the same degree (see, for example, \cite{2}). This has led to new approaches to defining irregular graphs, a comprehensive overview of which is presented in the book ``Irregularity in Graphs'' \cite{3}.
	\begin{figure}[h!]
		\centering\includegraphics[width=0.32\textwidth]{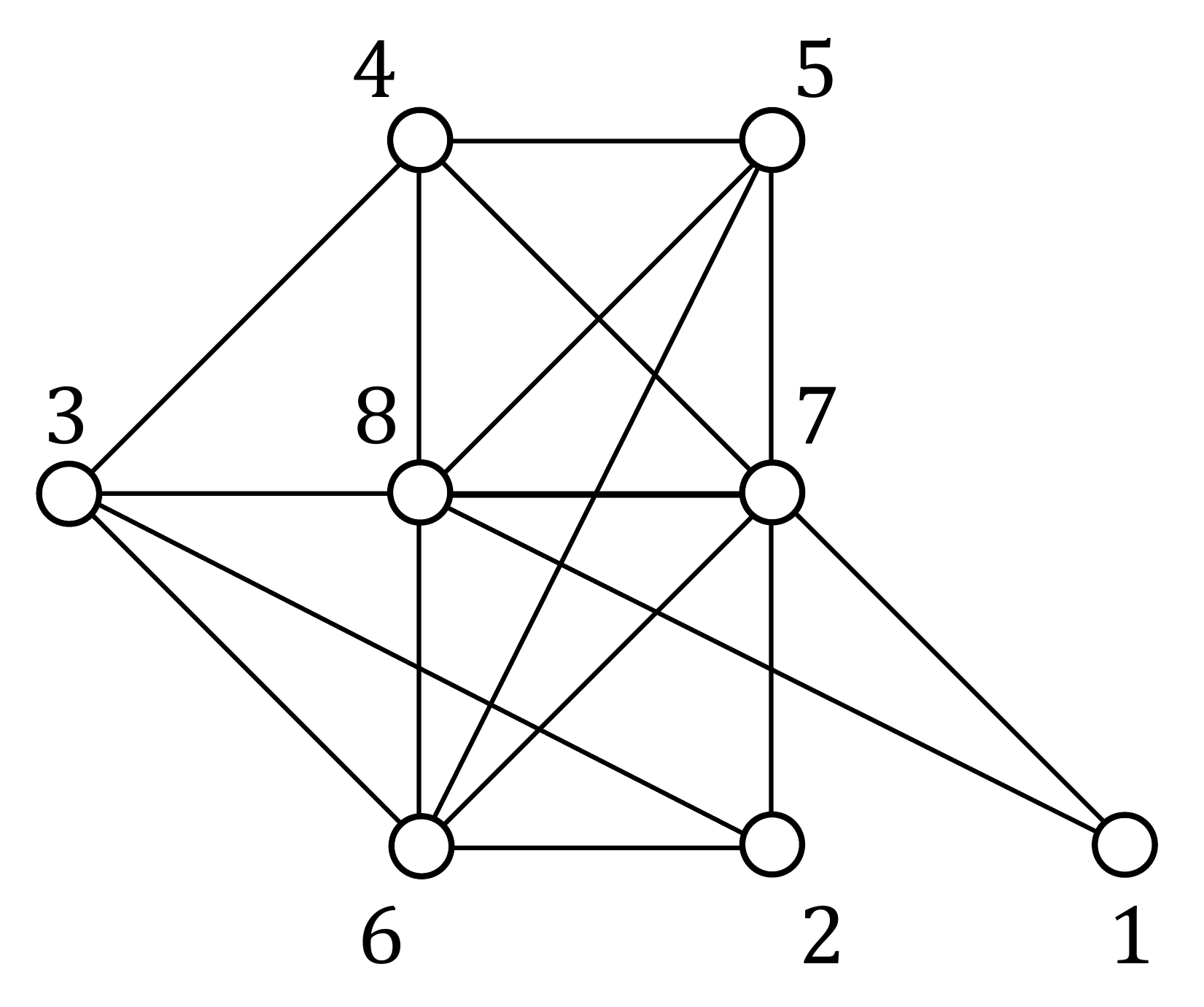}	
		\caption{$K_3$-irregular graph {$D_8$}.}
		\label{fig1}
	\end{figure}
	
	In the field of $F$-irregular graphs, for each graph $F$, the questions of existence, number and order of $F$-irregular graphs are of great   interest. It is known \cite{4} that for the path $P_3$ on three vertices, there exists a $P_3$-irregular graph of any order starting from $6$. $K_3$-irregular graphs of any order starting from $7$ were constructed in \cite{5}.
	
	In  \cite{1}, the existence of non-trivial $F$-irregular graphs was shown in the case where $F$ is a star or a complete graph of order 3 or more, and the following conjecture was proposed.
	\begin{conjecture}\phantomsection\label{con1} \mbox{For every connected graph $F$ of order $3$} or more, there exists a non-trivial
		$F$-ir\-re\-gular graph.
	\end{conjecture}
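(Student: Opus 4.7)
The plan is to construct, for every connected graph $F$ with $|V(F)| \ge 3$, an explicit non-trivial $F$-irregular graph $G$, splitting into cases according to the structure of $F$.

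First I would handle the case where $F$ has a leaf $u$ with unique neighbour $w$. The idea is to take a large ``almost $F$-regular'' base graph $B$ (for instance a vertex-transitive graph large enough to embed many copies of $F - u$) and attach pendant copies of the subgraph $F - u$ to a selected set of vertices $v_1, \ldots, v_n$, with the $i$-th vertex receiving $i$ such pendants, each identified to $v_i$ along $w$. Every extra pendant at $v_i$ creates a controlled number of new copies of $F$ through $v_i$, whereas the $F$-degrees of the other $v_j$ are affected only by predictable cross-interactions between pendants. This approach generalises the star construction of \cite{r1} and the $P_3$ construction of \cite{r3}.

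Second I would address the harder case of $F$ with minimum degree at least $2$. Now single pendants no longer create copies of $F$, so the gadgets must be denser. I would mimic the $K_3$-irregular construction of Figure~\ref{fig1} and \cite{r4}: seek a base graph in which an arithmetic progression $0, 1, \ldots, n-1$ of $F$-degrees is achieved by tuning ``local blocks'' at each vertex. A natural template is a blow-up of $F$ itself with a carefully chosen additional edge-set, so that an inclusion-exclusion count of the subgraphs isomorphic to $F$ through each vertex isolates the contribution of each local block. The present paper's treatment of $F = \overrightarrow{C_n}$ would fit into this template.

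The main obstacle in both cases is global interference: any local modification that changes $Fdeg_G(v)$ at some vertex $v$ also changes $Fdeg_G(u)$ at many other vertices $u$, risking collisions. A successful proof would require a bookkeeping argument, likely via inclusion-exclusion or a linear-algebraic argument over $\mathbb{Z}$, that simultaneously controls all $F$-degrees. Since the conjecture has remained open in full generality since \cite{r1}, I would not expect a single uniform construction to succeed; my proposal is essentially a proof by structure of $F$, and the arbitrariness of $F$ is the principal difficulty.
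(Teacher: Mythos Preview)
The statement you are attempting to prove is \emph{Conjecture~1} in the paper, not a theorem: the paper presents it as the open question of Chartrand, Holbert, Oellermann and Swart from 1987 and offers no proof whatsoever. The paper's own contribution is restricted to the oriented analogue for $F=\overrightarrow{C_n}$ (Conjecture~3 for oriented cycles), so there is no ``paper's own proof'' to compare your proposal against.

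As for the proposal itself, it is a research outline rather than a proof, and you candidly say as much in your final paragraph. The leaf case is plausible in spirit but already incomplete: attaching pendant copies of $F-u$ at $v_i$ generally changes the $F$-degrees of \emph{all} vertices inside those pendants and of any vertex of $B$ within distance $\mathrm{diam}(F)$ of $v_i$, and you have not argued why the resulting multiset of $F$-degrees is injective. The $\delta(F)\ge 2$ case is only a heuristic (``mimic the $K_3$ construction'', ``blow-up of $F$ with a carefully chosen additional edge-set''); no construction is specified, no count is carried out, and the closing sentence concedes that the arbitrariness of $F$ is the principal difficulty. In short, the proposal identifies the right obstacles but does not overcome them, which is consistent with the conjecture's status as open.
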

	
	In 2024, Dovzhenok, Filuta, and Chuhai \cite{6} proved that for any biconnected graph $F$ with  minimum vertex degree  $2$  there exist  infinitely many   $F$-irregular graphs.
	In particular, this result holds for any  simple cycle  $C_n$ on $n \ge 3$ vertices. Thus, the problem of $F$-irregular graphs was first considered for a graph $F$ of arbitrarily large diameter. Additionally,  a more general hypothesis  than Conjecture~\ref{con1} was proposed in \cite{6}.
	\begin{conjecture}[Strong conjecture about $F$-irregular graphs]\phantomsection\label{con2}  For any connected graph $F$ of~or\-der~$|F| \ge 3$,  there exist infinitely many $F$-irregular graphs.
	\end{conjecture}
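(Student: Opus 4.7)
Since Conjecture 2 is open in full generality, what follows is a strategy rather than a finished proof plan. The natural first reduction is to note that by \cite{r5} the conjecture already holds for every biconnected graph $F$ of order $\ge 3$, since any such $F$ automatically satisfies $\delta(F) \ge 2$. Hence the remaining cases are connected graphs $F$ with at least one cut vertex -- in particular, all trees on $\ge 3$ vertices and all graphs obtained from a biconnected core by attaching pendant trees.

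My plan would be an extension-based construction. Fix $F$ and a vertex $v$ of maximum degree in $F$, and decompose $F$ into its branches at $v$. For each large $m$, construct a candidate $G_m$ by taking a highly asymmetric backbone $B_m$ -- for instance a carefully chosen caterpillar or spider -- and attaching small local gadgets at each vertex which tune the number of copies of $F$ passing through it. The aim is to force the $F$-degrees to form, say, an arithmetic progression across the vertices of $G_m$. Infinitely many orders then follow either by iterating the attachment or by adjoining isolated ``padding'' components that contain no copy of $F$ at all.

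Two obstacles dominate. First, the $F$-degree is a \emph{global} invariant: adding a single edge can simultaneously change the $F$-degrees of many distant vertices in a way that resists simple local control, so naive perturbation arguments break down. Second, whenever $F$ has a nontrivial automorphism (for instance $F = P_n$ with $n \ge 4$), any automorphism of $G_m$ forces pairs of vertices with equal $F$-degree, so the construction must rigidly destroy every relevant automorphism of $G_m$. If the constructive route proves too delicate, a probabilistic fallback in $G(n,p)$ with $p$ slightly above the threshold for copies of $F$ may work, but obtaining concentration sharp enough to survive the union bound over $\binom{n}{2}$ vertex pairs looks like the central technical hurdle and the reason the conjecture has resisted a uniform attack so far.
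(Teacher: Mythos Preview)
The statement you were asked to address is \emph{Conjecture 2}, which the paper explicitly records as an open problem; there is no proof of it in the paper to compare against. You correctly recognise this at the outset and present a heuristic strategy rather than a claimed proof, which is the honest thing to do.

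A few remarks on the strategy itself. Your reduction step is slightly imprecise: the result of \cite{r5} is stated for biconnected $F$ with minimum degree equal to $2$, not merely $\delta(F)\ge 2$, so it does not automatically cover, say, $K_4$ or other $3$-regular biconnected graphs (those are handled separately via \cite{r1} for complete graphs, but not in general). More importantly, the two obstacles you identify --- the global nature of the $F$-degree under edge perturbations, and the need to break automorphisms --- are real, but your proposed ``backbone plus local gadgets'' construction does not come with any mechanism for actually controlling them; the sketch stops precisely at the point where the difficulty begins. The probabilistic fallback is likewise only a name for an approach, not an argument: as you yourself note, the required anti-concentration over all $\binom{n}{2}$ pairs is exactly the unresolved core. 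In short, what you have written is a fair survey of where the problem stands, but it is not a proof and should not be mistaken for one --- and since the paper itself offers no proof of this conjecture, that is entirely appropriate.
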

	
	Recently, Conjecture~\ref{con1} was verified for the $n$-vertex path $P_n$ ($n \geq 3$), while Conjecture~\ref{con2} was confirmed for the path $P_4$~\cite{7}.
	
	
	In this paper, we  consider the problem of $F$-irregular graphs in the class of oriented graphs.
	By an oriented graph, we mean a directed graph without symmetric arcs. In other words, in such a digraph,  for any two of its vertices $u$ and $v$, there are no arcs $(u,v)$ and $(v,u)$ at the same time (here and below, for vertices $u, v$, the arc directed from $u$ to $v$  will be denoted by  $(u, v)$). We will consider only finite oriented graphs without loops and multiple arcs. An oriented graph is called \emph{strongly connected} (\emph{weakly connected}) if for any two of its vertices $u$ and $v$, there exists a directed (undirected) path from $u$ to $v$.
	
	Let $G$ be an oriented graph. We write the set of  vertices of $G$ as $V(G)$, and the set of arcs of $G$ as $A(G)$. The number of vertices in $G$ (its order) is denoted by $|G|$. Also, for any set $X$, we use a similar notation, $|X|$, for the number of elements in $X$. We say that a vertex $v$ is incident to an arc $(a,b) \in A(G)$ if $v \in \{a, b\}$.  The \emph{indegree} and  \emph{outdegree}  of a vertex $v$ in a graph $G$ are defined as
	$$\deg_G^-(v)=|\{u \in V(G) \mid (u,v) \in A(G)\}|, \quad
	\deg_G^+(v)=|\{u \in V(G) \mid (v,u) \in A(G)\}|,$$		
	respectively.
	
	With regard to the question of the existence of $F$-irregular oriented graphs, we believe that a statement similar to Conjecture~\ref{con2} for undirected graphs is true.
	\begin{conjecture}[Strong conjecture about $F$-irregular oriented graphs]\phantomsection\label{con3} For every weakly~con\-nec\-ted orien\-ted graph $F$ of order $|F| \ge 3$, there are infinitely many $F$-irregular oriented graphs.
	\end{conjecture}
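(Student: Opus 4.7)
The plan is to attack the conjecture through its main special case $F = \overrightarrow{C_n}$, which is the focus of the rest of this paper, since a clean construction there is the most promising entry point and should also hint at what is needed in general. Building on the construction used for undirected cycles in \cite{r5}, I would try to exhibit, for each sufficiently large $k$, an explicit oriented graph $G_k$ whose vertices have pairwise distinct $\overrightarrow{C_n}$-degrees, and then let $k \to \infty$ to obtain the required infinite family.

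The construction I have in mind starts from a small ``kernel'' oriented graph $H$ of directed girth strictly greater than $n$, so that $H$ itself contains no copy of $\overrightarrow{C_n}$. For distinguished vertices $v_1,\ldots,v_r$ of $H$, I would attach to $v_i$ exactly $a_i$ internally disjoint directed paths of length $n-1$ starting and ending at $v_i$ (using fresh private vertices for the interior), with the integers $a_i$ chosen so that the resulting values $\overrightarrow{C_n}deg_{G_k}(v_i)$ form a strictly increasing sequence. The interior vertex of each attached pendant then lies on exactly one new copy of $\overrightarrow{C_n}$; these interior vertices can be further separated from one another by attaching short auxiliary directed paths of length greater than $n$, which add nothing to any $\overrightarrow{C_n}$-count but break residual symmetries. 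Scaling $k$ just inflates the number of vertices and the spread of the $a_i$.

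The main obstacle I anticipate is bookkeeping of parasitic cycles: when the pendants attached to $v_i$ meet the kernel $H$ (or the pendants attached to some other $v_j$), their arcs can combine with arcs already in $G_k$ to close unintended copies of $\overrightarrow{C_n}$ through vertices we meant to leave undisturbed. To control this I would take $H$ to be an oriented tree with a single additional back-arc, chosen so that all directed walks between distinguished vertices have lengths lying outside a ``dangerous range'' determined by $n$; and I would orient each pendant consistently around the directed cycle so that reversing direction inside it is impossible. Verifying that the only copies of $\overrightarrow{C_n}$ in $G_k$ are the ones prescribed by the construction is the step I expect to be delicate, most likely requiring case distinctions according to the residue of $n$ modulo the structural parameters of $H$.

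For the full conjecture with an arbitrary weakly connected oriented graph $F$, I would not expect the pendant construction to transfer verbatim, because $F$ may have several vertices of non-trivial indegree and outdegree at which gluing is possible, and its automorphism group can collapse the degrees we tried to separate. The plausible routes are either a probabilistic argument, showing that a large random tournament perturbed by a bounded number of local arc-reversals achieves all-distinct $F$-degrees with positive probability, or a reduction to the cyclic case by locating a spanning oriented subgraph of $F$ of controlled type. Both routes remain speculative, and carrying either through is the real obstacle to the general conjecture.
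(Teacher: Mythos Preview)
The statement you are addressing is a \emph{conjecture}, and the paper does not prove it in general; it only establishes the special case $F=\overrightarrow{C_n}$ (Theorem~\ref{t7}, via Corollaries~\ref{cor2}, \ref{cor3}, \ref{cor4}). So there is no paper proof of the full statement to compare against, and your own proposal rightly concedes that the general case remains speculative.

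For the cyclic case your pendant construction has a genuine gap. Each pendant you attach at $v_i$ is a directed $n$-cycle through $v_i$, so every one of its $n-1$ interior vertices lies on exactly one copy of $\overrightarrow{C_n}$ and hence has $\overrightarrow{C_n}$-degree equal to~$1$. With $\sum_i a_i$ pendants this produces $(n-1)\sum_i a_i$ interior vertices all sharing the same $\overrightarrow{C_n}$-degree. Your proposed remedy---attaching auxiliary directed paths that ``add nothing to any $\overrightarrow{C_n}$-count''---is self-defeating: an operation that leaves all $\overrightarrow{C_n}$-degrees unchanged cannot separate vertices whose $\overrightarrow{C_n}$-degrees already coincide. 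Worse, the new vertices on those auxiliary paths would themselves have $\overrightarrow{C_n}$-degree~$0$, creating another large block of tied vertices. The difficulty is not parasitic cycles but the fact that any gadget-based construction tends to manufacture many vertices with identical small degrees.

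The paper sidesteps this by a completely different mechanism. For $n\ge 5$ it builds a dense layered graph $\overrightarrow{A_{2l+2,\,n}}$ (two transitive tournaments on $l$ vertices linked through two hub vertices) in which essentially every vertex lies on many copies of $\overrightarrow{C_n}$; it then computes the exact $\overrightarrow{C_n}$-degrees in closed form (Lemma~\ref{lemma1}, via the bijective count in Proposition~\ref{p2}) and imposes the arithmetic condition $n-3\nmid l-1$ to rule out collisions between the two layers (Lemma~\ref{lemma2}). The cases $n=3,4$ are handled by separate explicit families. The separation of degrees thus comes from global combinatorics and a divisibility obstruction, not from local pendant gadgets.
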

	
	In this paper, we confirm Conjecture~\ref{con3} for each oriented cycle.
	\begin{dfn} Let $n \ge 3$ be an integer. An oriented cycle of order $n$, denoted by $\overrightarrow{C_n}$, is a strongly connected oriented graph in which the indegree and outdegree of all vertices are equal to $1$. An oriented  cycle with the vertex set  $\{v_1,v_2,\ldots,v_n\}$ and the arc set  $\{(v_1,v_2),(v_2,v_3),\ldots,(v_{n-1},v_n)$,  $(v_n,v_1)\}$ will be written as $(v_1,v_2,\ldots,v_n)$.\end{dfn}

	In addition, we also determine all the possible values for the order of a $\overrightarrow{C_n}$-irregular graph when $n \in \{3,4\}$.
	
	The structure of the article is as follows. In Section~\ref{dsec2}, for each integer $n \ge 5$, we construct
	an infinite family of $\overrightarrow{C _n }$-irregular graphs. In Section~\ref{dsec3}, we provide  examples of $\overrightarrow{C _4 }$-irregular graphs of every  order $k \ge 7$ and prove that there is no  non-trivial $\overrightarrow{C _4 }$-irregular graph of smaller  order. In Section~\ref{dsec4}, we show that the minimal order of a non-trivial $\overrightarrow{C _3}$-irregular graph is $10$, and we present constructions of $\overrightarrow{C _3 }$-irregular graphs of every  order starting from $10$. Finally, in Section~\ref{dsec5}, we state the main result.
	
	Note that in this paper, we denote the number of $k$-element subsets of an $n$-element set by $C_n^k$, that is, for integers $n \ge k \ge 0$, $C_n ^k= \dfrac {n!}{k!(n-k)!}$, where $0!= 1$, $m!= 1 \cdot 2 \cdot 3 \cdot \ldots \cdot m$ if $m$ is a positive integer, and $C _n ^k=0$ otherwise.
	
	\section{\texorpdfstring{$\overrightarrow{C _n }$}{C_n}-irregular graphs in case $n \ge 5$}\label{dsec2}
	
	\begin{dfn} Let $l, n$ be integers such that $l \ge n \ge 5$. Consider the graph $\overrightarrow{A_{2l+2, n}}$ with the set of vertices
		\[V(\overrightarrow{A_{2l+2, n}})=V_1 \cup V_2 \cup \{2l+1,2l+2\}, \ V_1= \{1,2,\dots,l\}, \ V_2= \{l+1,l+2,\dots,2l\},\] 	
		and the set of arcs
		\begin{align*}
			A(\overrightarrow{A_{2l+2,n}}) &= \{(i,j) \mid i,j \in V_1,i<j\} \cup \{(i,j) \mid i,j \in V_2,i<j\} \cup \{(2l+1,i) \mid i \in V_2\} \\[1ex]
			&\phantom{=} \cup \{(i,j) \mid i \in V_2,j \in V_1,i-j \le l\} \\[1ex]
			&\phantom{=} \cup \{(l,2l+1),(l+n-2,2l+2),(2l+2,2l+1)\}.
	\end{align*}\end{dfn}
	
	For the sake of clarity, we place the vertices of the graph $\overrightarrow{A_{2l+2,\, n}}$ on 3 levels as shown in Fig.~\ref{fig2}. In this case, the vertices of each level will be numbered in ascending order from left to right and so that each vertex $i\in V_1$ is located directly above the vertex $l+i$. Then, from each vertex of the top and middle levels, there is an arc to any other vertex on the same level located to the right of it. Furthermore, from any vertex $i$ of the middle level, there is an arc to every vertex of the top level located directly above it (vertex $i - l$) or to the right of $i - l$. From vertex $l$, there is exactly one arc $(l,2l+1)$. Finally, from vertex $2l+1$, there is an arc to every vertex of the middle level, and vertex $2l+2$ is incident to exactly two arcs $(l+n-2,2l+2)$, $(2l+2,2l+1)$. \smallskip
	
	\begin{dfn} Let $a_i = \overrightarrow{C_n}\deg_{\overrightarrow{A_{2l+2,\, n}}}(i)$ for each $i \in V(\overrightarrow{A_{2l+2,\, n}})$.\end{dfn}
	\begin{lemen}\phantomsection\label{lemma1}
		Let $l, n$ be integers such that $l \ge n \ge 5$. Then the $\overrightarrow{C_n}$-degrees of the vertices in the graph $\overrightarrow{A_{2l+2,\, n}}$ are equal to\smallskip

		$1) \ a_i=iC_{l-2}^{n-4} \ \  \forall i \in \{1,2,\ldots,l-1\};$\smallskip

		$2) \ a_l=lC_{l-1}^{n-3};$\smallskip

		$3) \ a_i=(2l-i)C_{l-2}^{n-4}+C_{l-1}^{n-3}+1 \ \  \forall i \in \{l+1,l+2,\ldots,l+n-2\};$\smallskip

		$4) \ a_i=(2l-i)C_{l-2}^{n-4}+C_{l-1}^{n-3} \ \   \forall i \in \{l+n-1, l+n,\ldots,2l\};$\smallskip

		$5) \ a_{2l+1}=lC_{l-1}^{n-3}+1;$\smallskip

		$6) \ a_{2l+2}=1.$
	\end{lemen}
	\begin{figure}[t]
		\centering
		\includegraphics[width=0.6\textwidth]{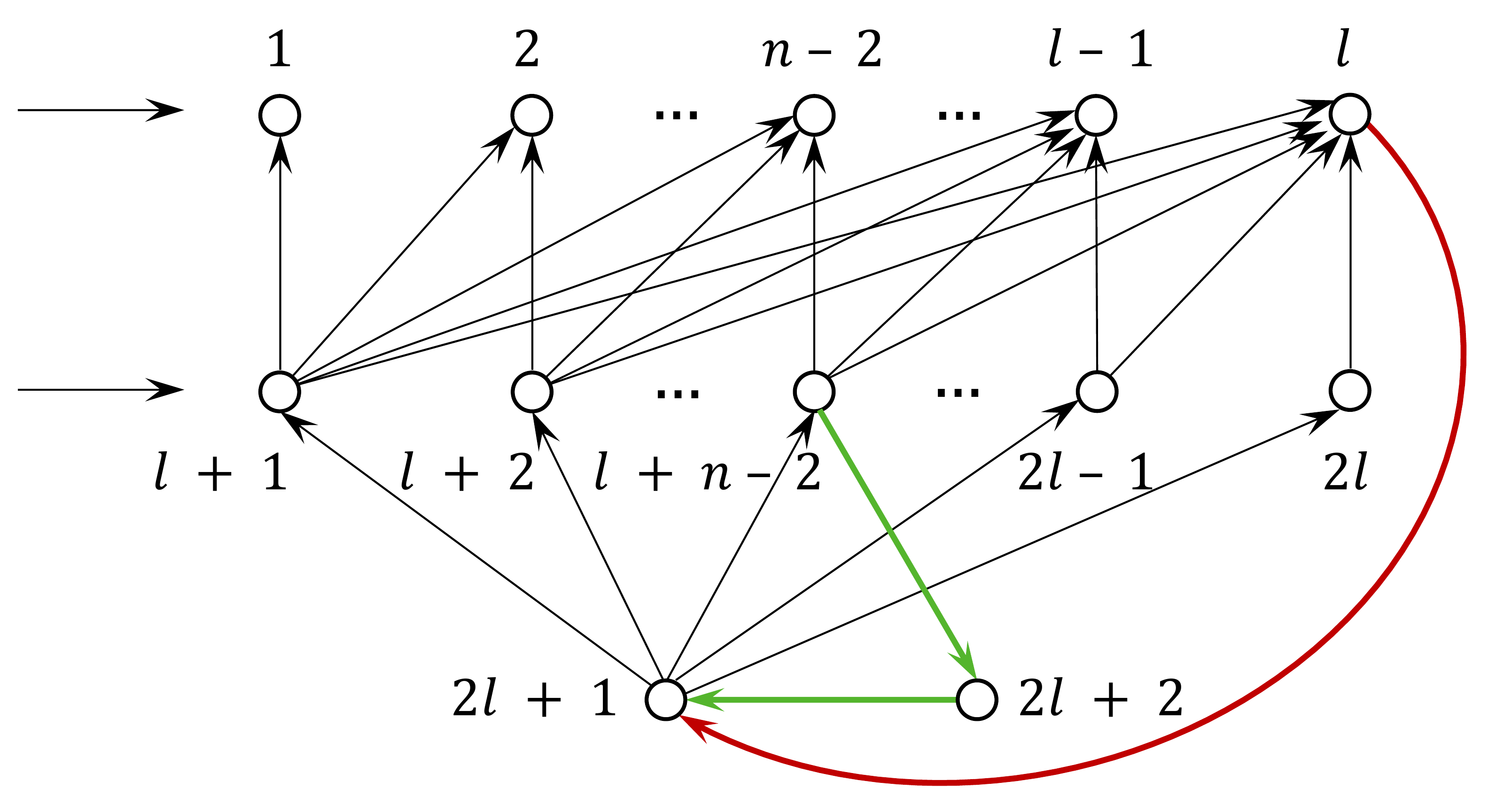}
		\caption{Graph $\overrightarrow{A_{2l+2,\, n}}$.}
		\label{fig2}
	\end{figure}
	
	\begin{proof}
		In the proof of Lemma~\ref{lemma1} and also in  Corollary~\ref{cor1}, any subgraph of the graph $\overrightarrow{A_{2l+2,\, n}}$ that is isomorphic to $\overrightarrow{C_n}$ will be called a \emph{contour}.	
		Let's take a walk along the arcs of each contour, following their direction.
		Based on the structure of  $\overrightarrow{A_{2l+2,\, n}}$ (see Fig.~\ref{fig2}), the traversal of each contour is subject to the following rules:
		\begin{enumerate}[label=R\arabic*., noitemsep, topsep=4pt]
			
			\item On the top and middle levels of the graph $\overrightarrow{A_{2l+2,\, n}}$, movement is only possible from left to right.
				
			\item  Descent from the top level is only possible via the arc $(l,2l+1)$.
			
			\item From any vertex $i$ of the middle level, one can go  along an arc to a vertex $j$ of the top level if and only if $i-l \leq j \leq l$.
			
			\item Descent from the middle level to the bottom level is only possible via the arc $(l+n-2,2l+2)$.
			
			\item From vertex $2l+1$, you can go along the arc to any vertex of the middle level, and only to those vertices. 
		\end{enumerate}
		\begin{prop}\phantomsection\label{p1}
			There is exactly one contour containing vertex $2l+2$\textup{:}
			$$(l+1,l+2,\ldots,l+n-3,l+n-2,2l+2,2l+1).$$	
		\end{prop}
		\begin{proof}
			Based on the definition of $\overrightarrow{A_{2l+2,\, n}}$, every contour with  vertex $2l+2$  must include the arcs $(l+n-2,2l+2)$, $(2l+2,2l+1)$, and therefore cannot contain the arc $(l,2l+1)$. Hence, taking into account R2, such contours do not contain any top-level vertices. Then, by  R1, R4, they do not contain any vertices from the set $\{l+n-1,l+n,\ldots,2l\}$. Thus, the vertices of each contour containing $2l+2$ belong to the set $V=\{l+1,l+2,\ldots,l+n-2\} \cup \{2l+1,2l+2\}.$ Clearly,  there is only one desired contour with vertices from $V$: $(l+1,l+2,\ldots,l+n-2,2l+2,2l+1)$. 	
		\end{proof}
	
		By Proposition {\ref{p1}}, $a_{2l+2}=1$. Further,
		any contour without vertex $2l+2$ will be called a \emph{base  contour}. From R1, R2, R4, R5 it follows that every base  contour contains vertices $l,2l+1$ and at least one vertex at the middle level of $\overrightarrow{A_{2l+2,\, n}}$.
		
		Fix $i \in V(\overrightarrow{A_{2l+2,\, n}}) \backslash \{2l+2\}$. Let us call an $[i,j]$-\emph{contour} any base contour with vertices $i,j$, where $j$ is the rightmost vertex of this contour in the middle level of $\overrightarrow{A_{2l+2,\, n}}$.
		It is easy to see that the number of all base contours with vertex $i$ is equal to the number of all $[i,j]$-contours, where $j \in \{l+1,l+2,\ldots,2l\}$. Next, we say that an ordered pair of vertices $[i,j]$ in $\overrightarrow{A_{2l+2,n}}$ is \emph{correct} if the following two conditions are satisfied:
		\[
		j \in \{l+1, l+2,\dots, 2l\} \quad \text{and} \quad i=2l+1 \text{ or } 0 \leq j-i \leq l,
		\]
		and \emph{incorrect} otherwise.
		
		From R1--R4 and the definition of an $[i,j]$-contour, one can conclude that for any incorrect pair $[i,j]$ there is no $[i,j]$-contour.		
		\begin{prop}\phantomsection\label{p2}
			For every correct pair $[i,j]$, the number of $[i,j]$-contours  is equal to $C_{l-1}^{n-3}$ if $i \in \{l, 2l+1, j\}$, and $C_{l-2}^{n-4}$ otherwise.
		\end{prop}
		\begin{proof}
			Let $[i,j]$ be a correct pair. Based on R1-R4 and the definition of an $[i,j]$-contour, we conclude that the vertices of any $[i,j]$-contour belong to the following set: $$V_j=\{j-l, j-l+1, \ldots, j\} \cup \{2l+1\}.$$
			
			Now, let's consider two cases for $i$.
			\smallskip
			
			\textbf{Case 1.} $i \in \{l, 2l+1, j\}$. First of all, we note that  any $[l,j]$-contour has exactly $3$ fixed vertices $l, j, 2l+1$, and the remaining  $n-3$ vertices for such contours  can be chosen in $C_{l-1}^{n-3}$ ways from the set $V_j \backslash \{l, j, 2l+1\}$. Next, we show that from any $n-3$  selected vertices, together with the vertices $l, j, 2l+1$, it is possible to construct exactly one $[l,j]$-contour, unique for each set of $n-3$ selected vertices. It is indeed true that if for some positive integer $ x \leq n-2$, the vertices $u_1<u_2<\ldots<u_x=l$ belong to the set $\{j-l, j-l+1, \ldots, l\}$, and the vertices $v_1<v_2<\ldots<v_{n-1-x}=j$ belong to the set $\{l+1, l+2, \ldots, j\}$, then, taking into account R1-R5, we can construct only one unique $[l,j]$-contour with vertices $u_1, u_2, \ldots, u_x, v_1, v_2, \ldots, v_{n-x-1}, 2l+1$:
			$(u_1, u_2,\ldots, u_x, 2l+1, v_1, v_2,  \ldots,v_{n-x-1}).$	
			Hence, the number of $[l,j]$-contours is equal to $C_{l-1}^{n-3}$.
			It remains to be noted that every $[j,j]$-contour and every $[2l+1,j]$-contour is also an $[l,j]$-contour, and vice versa. Therefore, the number of $[j,j]$-contours, as well as the number of $[2l+1,j]$-contours, is equal to $C_{l-1}^{n-3}$.
			\smallskip
			
			\textbf{Case 2.} $i \notin \{l, 2l+1, j\}$. In this case, any $[i,j]$-contour has exactly $4$ fixed vertices $i, j, l, 2l+1$, and the remaining $n-4$ vertices for such contours can be chosen in $C_{l-2}^{n-4}$ ways from the set $V_j \backslash \{i, j, l, 2l+1\}$. Similarly to case 1, it can be proven that from any $n-4$ selected vertices and the vertices $i, j, l, 2l+1$, one can form exactly one $[i,j]$-contour, with different selections corresponding to different $[i,j]$-contours. Thus, the number of $[i,j]$-contours is $C_{l-2}^{n-4}$ if $i \notin \{l, 2l+1, j\}$.		
		\end{proof} 	
		\medskip
		
		Let us assume that $i \in \{1,2,\ldots,l-1\}$. By definition, $[i,j]$ is a correct pair if and only if $j \in \{l+1,l+2,\ldots,l+i\}$. Thus, taking into account Propositions {\ref{p1}}, {\ref{p2}}, we find $a_i$: 		
		$$a_i=\sum _ {j=l+1}^{l+i} C_{l-2}^{n-4}=iC_{l-2}^{n-4}\ \  \forall i \in \{1,2,\ldots,l-1\}.$$ 			
		
		If $i \in \{l+1,l+2,\ldots,2l\}$, then a pair $[i,j]$ will be correct if and only if $j \in \{i,i+1,\ldots,2l\}$. Hence, by Propositions {\ref{p1}}, {\ref{p2}}, we calculate $a_i$:		
		\begin{equation*}\begin{split}a_i&=C_{l-1}^{n-3}+\sum_{j=i+1}^{2l}C_{l-2}^{n-4}+1= (2l-i)C_{l-2}^{n-4}+C_{l-1}^{n-3}+1 \ \  \forall i \in \{l+1,l+2,\ldots,l+n-2\},	\\		
				a_i&=C_{l-1}^{n-3}+\sum_{j=i+1}^{2l}C_{l-2}^{n-4}= (2l-i)C_{l-2}^{n-4}+C_{l-1}^{n-3} \ \  \forall i \in \{l+n-1,l+n,\ldots,2l\}.\end{split}\end{equation*}		
		
		To find $a_l$, we notice that a pair $[l,j]$ is correct if and only if  $j \in \{l+1,l+2,\ldots,2l\}$. Then, based on Propositions {\ref{p1}}, {\ref{p2}}, we get
		$$a_l=\sum_{j=l+1}^{2l}C_{l-1}^{n-3}= lC_{l-1}^{n-3}.$$		
		
		Finally, we compute $a_{2l+1}$. Since each base contour has vertices $l, 2l+1$, and by Proposition {\ref{p1}}, the contour with vertex $2l+2$ contains vertex $2l+1$ and does not contain vertex $l$, then
		\[a_{2l+1}=a_l+1= lC_{l-1}^{n-3}+1. \qedhere\]	
	\end{proof}
	\begin{cor}\phantomsection\label{cor1}
		Let $l \ge n \ge 5$ be integers. Then the following inequalities are true:
		
		\smallskip 	
		$1) \ 1<a_1<a_2<a_3<\ldots<a_{l-1};$
		
		\smallskip	
		$2) \ a_{l+1}>a_{l+2}>a_{l+3}>\ldots>a_{2l}>1;$
		
		\smallskip
		$3) \ a_{2l+1}>a_l>a_{l+1}>a_{l-1}.$  	
	\end{cor}
	\begin{proof}
		Inequalities 1), 2), as well as the inequalities  $a_{2l+1}>a_l$ and $a_{l+1}>a_{l-1}$, follow directly from Lemma~\ref{lemma1}. 	
		Let's show that $a_l>a_{l+1}$.
		By the proof of Lemma~\ref{lemma1}, every contour in $\overrightarrow{A_{2l+2,\, n}}$  except one contains vertex $l$. On the other hand, two contours   $(l,2l+1,l+2,l+3,\ldots,l+n-1)$ and $(l,2l+1,l+3,l+4,\ldots,l+n)$ do not contain  vertex $l+1$. Hence, $a_l>a_{l+1}$.	
	\end{proof}
	\begin{lemen}\phantomsection\label{lemma2}
		Let $l \ge n \ge 5$ be integers and  $l-1$ is not divisible by $n-3$. Then
		$a_i \ne a_j$  for all $i \in V_1 \backslash \{l\}, \ j \in V_2.$
	\end{lemen}
	\begin{proof}
		Suppose, for the contrary, that  $a_i=a_j$ for some $i \in V_1 \backslash \{l\}, \ j \in V_2$. 	
		We distinguish two cases for $j$.
		\smallskip
		
		\textbf{Case 1.} Let $j \in \{l+n-1,l+n,\ldots,2l\}$. By Lemma~\ref{lemma1}, for integers $l \ge n \ge 5$, we have
		\begin{multline*} a_i=a_j      \iff  iC_{l-2}^{n-4}=(2l-j)C_{l-2}^{n-4}+C_{l-1}^{n-3}\\
			\iff  (i+j-2l)\frac{(l-2)!}{(n-4)!(l-n+2)!}=\frac{(l-1)!}{(n-3)!(l-n+2)!}\\
			\iff  (i+j-2l)(n-3)=l-1.\end{multline*}
		It follows from the last equality that $l-1$ is divisible by $n-3$, which contradicts the condition of Lemma~\ref{lemma2}.   From this we conclude that
		$a_i \ne a_j$ for all $i\in V_1 \backslash \{l\}$, $j \in \{l+n-1,l+n,\ldots,2l\}.$
		\smallskip
		
		\textbf{Case 2.} Let $j \in \{l+1,l+2,\ldots,l+n-2\}$. Based on Lemma~\ref{lemma1}, for integers $l \ge n \ge 5$, we can equivalently transform the equality $a_i=a_j$ as follows: 	
		\begin{multline*}a_i=a_j      \iff iC_{l-2}^{n-4}=(2l-j)C_{l-2}^{n-4}+C_{l-1}^{n-3}+1\\
			\iff  (i+j-2l)\frac{(l-2)!}{(n-4)!(l-n+2)!}=\frac{(l-1)!}{(n-3)!(l-n+2)!}+1\\
			\iff  (i+j-2l)(n-3)-l+1=\frac{(n-3)!(l-n+2)!}{(l-2)!}.\end{multline*}
		From the last equality we obtain that the fraction $\dfrac{(n-3)!(l-n+2)!}{(l-2)!}$ must be an integer for integers $l \ge n \ge 5$. However, this is not true since for given $l$ and $n$, the following estimate holds:
		\begin{equation*}0<\frac{(n-3)!(l-n+2)!}{(l-2)!}=\frac{(n-3)!}{(l-n+3)(l-n+4)\ldots(l-2)}
			\leq \frac{(n-3)!}{3\cdot4\cdot \ldots\cdot (n-2)}=\frac{2}{n-2}<1.\end{equation*}	
		Consequently,
		$a_i \ne a_j$ for all $i \in V_1 \backslash \{l\}$, $j \in \{l+1,l+2,…,l+n-2\}.$
	\end{proof}
	\begin{theorem}\phantomsection\label{t1}
		For any integers $l, n$ with $l \ge n \ge 5$ and $l-1$ is not divisible by $n-3$, the graph $\overrightarrow{A_{2l+2,\, n}}$ is $\overrightarrow{C _n }$-irregular.
	\end{theorem}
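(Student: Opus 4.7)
The plan is to verify $\overrightarrow{C_n}$-irregularity of $\overrightarrow{A_{2l+2,\, n}}$ by showing that the values $a_i$ for $i \in V(\overrightarrow{A_{2l+2,\, n}})$ are pairwise distinct, partitioning the vertex set into the four natural blocks $V_1 \setminus \{l\}$, $\{l\}$, $V_2$, and $\{2l+1, 2l+2\}$, and comparing $a_i$ within each block and across pairs of blocks. All comparisons will be drawn from Lemma \ref{lemma1}, Corollary \ref{cor1}, and Lemma \ref{lemma2}, which between them cover exactly the cases that arise.

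First I would handle the extreme vertices. By Lemma \ref{lemma1}, $a_{2l+2}=1$, while Corollary \ref{cor1} parts 1) and 2) give $a_i > 1$ for every $i \in V_1 \setminus \{l\} \cup V_2$, and parts 3) of the same corollary together with 1) and 2) then force $a_l$ and $a_{2l+1}$ to be even larger; hence $a_{2l+2}$ is distinct from every other $a_i$. Next, chain the inequalities of Corollary \ref{cor1} to obtain
\[
a_i \le a_{l-1} < a_{l+1} < a_l < a_{2l+1} \quad \text{for all } i \in V_1 \setminus \{l\},
\]
\[
a_j \le a_{l+1} < a_l < a_{2l+1} \quad \text{for all } j \in V_2,
\]
which shows that $a_l$ and $a_{2l+1}$ are strictly separated from everything in $V_1 \setminus \{l\} \cup V_2$ and from each other. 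Thus $a_l$ and $a_{2l+1}$ differ from all other $\overrightarrow{C_n}$-degrees.

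Within the block $V_1 \setminus \{l\}$, the strict chain $a_1 < a_2 < \dots < a_{l-1}$ from Corollary \ref{cor1} 1) gives pairwise distinctness, and similarly the strict chain $a_{l+1} > a_{l+2} > \dots > a_{2l}$ from Corollary \ref{cor1} 2) handles the block $V_2$. The only remaining comparisons are between $V_1 \setminus \{l\}$ and $V_2$, and this is exactly what Lemma \ref{lemma2} supplies under the hypothesis that $l-1$ is not divisible by $n-3$.

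The main point to be careful about is that every pairwise comparison has been covered, so the proof is essentially a short bookkeeping argument once the three preparatory results are in hand; no new computation is required. The content of Theorem \ref{t1} is thus a direct assembly of Lemma \ref{lemma1}, Corollary \ref{cor1}, and Lemma \ref{lemma2}, with the divisibility hypothesis entering solely through Lemma \ref{lemma2} to rule out the only algebraic coincidence that could otherwise occur between a vertex in $V_1 \setminus \{l\}$ and a vertex in $V_2$.
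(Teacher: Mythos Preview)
Your proposal is correct and follows essentially the same approach as the paper: you partition the vertex set into $V_1\setminus\{l\}$, $\{l\}$, $V_2$, and $\{2l+1,2l+2\}$, use Corollary~\ref{cor1} to order the degrees within and across these blocks (identifying $a_{2l+1}$ as largest, $a_l$ as second largest, $a_{2l+2}$ as smallest), and invoke Lemma~\ref{lemma2} for the only nontrivial cross-block comparison. The paper's proof is organized identically, so there is nothing to add.
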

	\begin{proof}
		Let $l \ge n \ge 5$ be integers and $l-1$ is not divisible by $n-3$. Let us prove that the graph $\overrightarrow{A_{2l+2,\, n}}$ is $\overrightarrow{C _n }$-irregular.
		First of all, we note that by Corollary~\ref{cor1}, in each of the sets $V_1 \backslash \{l\}$ and $V_2$, the vertices have different $\overrightarrow{C _n}$-degrees. Furthermore, based on Lemma~\ref{lemma1} and Corollary~\ref{cor1}, we conclude that in the graph~$\overrightarrow{A_{2l+2,\, n}}$ vertex $2l+1$ has the largest $\overrightarrow{C _n}$-degree, vertex $l$ has the second largest, and vertex $2l+2$ has the smallest $\overrightarrow{C _n }$-degree.
		Finally, by Lemma~\ref{lemma2}, any two vertices, one of which belongs to the set $V_1 \backslash \{l\}$ and the other to the set $V_2$, also have different $\overrightarrow{C _n}$-degrees in  $\overrightarrow{A_{2l+2,\, n}}$.
		Thus, the $\overrightarrow{C _n}$-degrees of all vertices in $\overrightarrow{A_{2l+2,\, n}}$ are pairwise distinct.
		Hence, the graph $\overrightarrow{A_{2l+2,\, n}}$ is $\overrightarrow{C _n}$-irregular.
	\end{proof}
	\begin{cor}\phantomsection\label{cor2}
		For each integer $n\ge5$, there are infinitely many $\overrightarrow{C _n}$-irregular oriented graphs.
	\end{cor}
	
	\section{On the order of \texorpdfstring{$\overrightarrow{C _4 }$}{C_4}-irregular graphs}\label{dsec3}
	In this section, we focus our efforts on determining all values of $k$ for which there exists a $\overrightarrow{C_4}$-irregular graph of order $k$.
	\begin{dfn}In our work, every graph that is isomorphic to $\overrightarrow{C _4 }$ will be called a quadrangle.
	\end{dfn}
	
	\subsection{\texorpdfstring{$\overrightarrow{C_4}$}{C_4}-irregular graph of order 7}
	
	\begin{dfn}Let's define the graph $\overrightarrow{B_7}$ (see Fig.~\ref{fig3}) as follows:	
		\begin{align*}
			V(\overrightarrow{B_7})=\{1,2,\ldots,7\}, \quad
			A(\overrightarrow{B_7})&=\{(1,6),(2,6),(3,7),(4,2),(4,3),(4,6),(5,1),(5,2)\}\\
			&\phantom{=} \cup \{(5,3),(5,4),(5,6),(6,3),(6,7),(7,4),(7,5)\}. 	
	\end{align*} \end{dfn}	
\vspace{-1ex}
	\begin{figure}[h!]
		\centerline{\includegraphics[width=0.35\textwidth]{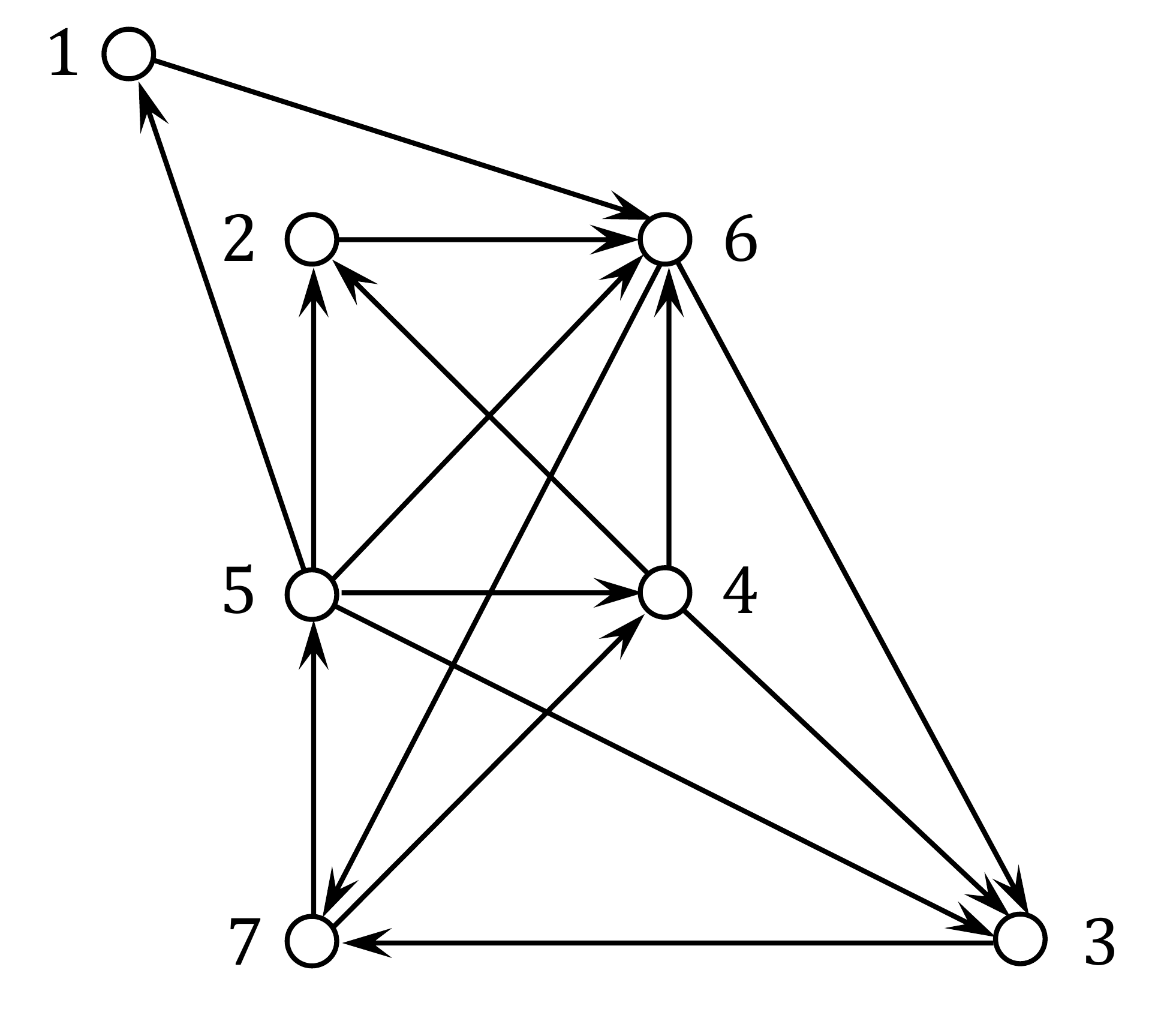}}
		\caption{Graph $\overrightarrow{B_7}$.}
		\label{fig3}
	\end{figure}	
\vspace{-1ex}
	\begin{lemen}\phantomsection\label{lemma3}
		The graph $\overrightarrow{B_7}$ is $\overrightarrow{C _4 }$-irregular.	
	\end{lemen}
	\begin{proof}
		It is easy to see that the graph	 $\overrightarrow{B_7}$ contains exactly $7$ quadrangles: $(1,6,7,5)$, $(2,6,7,4), (2,6,7,5), (3,7,4,6), (3,7,5,4), (3,7,5,6), (4,6,7,5).$ The $\overrightarrow{C _4 }$-degrees of all vertices in $\overrightarrow{B_7}$ are listed in Table~\ref{dtabl1}. Since they are distinct, $\overrightarrow{B_7}$ is a $\overrightarrow{C _4 }$-irregular graph.
	\end{proof}
	\begin{table} [b!]	\small
		\caption{$\overrightarrow{C_4 }$-degrees of vertices in graph $\overrightarrow{B_7}$}\label{dtabl1}
		\centerline{\begin{tabular}{ | m{4.5em} | m{0.4cm}| m{0.4cm} |m{0.4cm} |m{0.4cm} |m{0.4cm} |m{0.4cm} |m{0.4cm} | }
				\hline
				vertex & 1 & 2 & 3 & 4 & 5 & 6 & 7 \\
				\hline
				
				$\overrightarrow{C_4}$-degree & 1 & 2 & 3 & 4 & 5 & 6 & 7 \\
				\hline
		\end{tabular}}
	\end{table}	
	
	\subsection{\texorpdfstring{$\overrightarrow{C_4}$}{C_4}-irregular graph of order 8}
	
	\begin{dfn} Consider the graph $\overrightarrow{B_8}$ (see Fig.~\ref{fig4}) with the set of vertices $V(\overrightarrow{B_8})=\{1,2,\dots,8\}$ and the set of arcs
		\begin{align*}
			A(\overrightarrow{B_8})&=\{(1,2),(1,3),(2,3),(3,7),(3,8),(4,1),(4,2),(4,3),(4,5),(4,6),(4,8),(5,2)\}\\
			&\phantom{=} \cup \{(5,3),(5,6),(5,8),(6,3), (6,8),(7,2),(7,4),(7,5),(7,6),(8,1),(8,7)\}.
	\end{align*}\end{dfn}
	\begin{rem}\label{rem1}
		Here and below, when depicting graphs, an arc with a direction from the set of vertices $X$ to the set of vertices $Y$ will denote the set of arcs $\{(x,y) \mid x\in X, \ y \in Y\}$. For example, in Fig.~\ref{fig4},  the arc with direction from  vertex $7$ to the set of vertices $\{4, 5, 6\}$ implies that the graph~$\overrightarrow{B_8}$ contains the arcs $(7, 4)$, $(7, 5)$, $(7, 6)$.\end{rem}
	\begin{figure}[h!]\small
		\centerline{\includegraphics[width=0.4\textwidth]{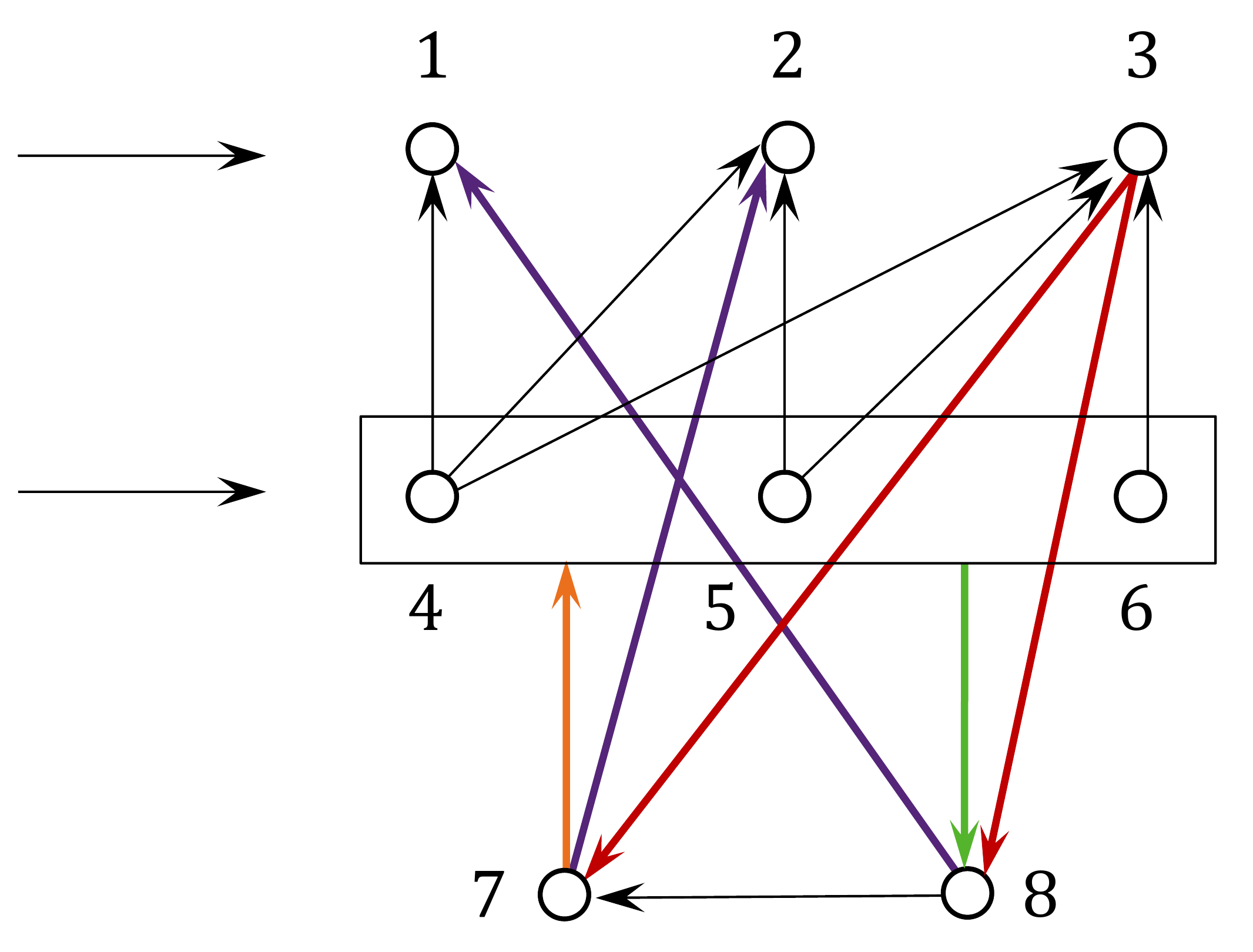}}
		\caption{Graph $\overrightarrow{B_8}$.}
		\label{fig4}
	\end{figure}
	
	\begin{lemen}\phantomsection\label{lemma4}
		The graph $\overrightarrow{B_8}$ is $\overrightarrow{C _4 }$-irregular.	
	\end{lemen}
	\begin{proof}
		There are exactly $14$ quadrangles in the graph $\overrightarrow{B_8}$: 	
		$(1,2,3,8)$, $(1,3,7,4)$, $(2,3,7,4)$, $(2,3,7,5)$, $(2,3,8,7)$, $(3,7,4,5)$, $(3,7,4,6)$, 	
		$(3,7,5,6)$, $(3,8,7,4)$,  $(3,8,7,5)$, $(3,8,7,6)$, $(4,5,8,7)$, $(4,6,8,7)$, $(5,6,8,7).$
		Next, from Table~\ref{dtabl2} we obtain that the $\overrightarrow{C _4 }$-deg\-rees of the vertices in $\overrightarrow{B_8}$ are pairwise distinct. Therefore, $\overrightarrow{B_8}$ is a $\overrightarrow{C _4 }$-irregular graph.
		\begin{table} [ht]\small
			\caption{$\overrightarrow{C _4 }$-degrees of vertices in graph $\overrightarrow{B_8}$}\label{dtabl2}
			\centerline{\begin{tabular}{ | m{4.5em} | m{0.4cm}| m{0.4cm} |m{0.4cm} |m{0.4cm} |m{0.4cm} |m{0.4cm} |m{0.4cm} | m{0.4cm} | }
					\hline
					vertex & 1 & 2 & 3 & 4 & 5 & 6 & 7 & 8 \\
					\hline
					$\overrightarrow{C_4}$-degree & 2 & 4 & 11 & 7 & 6 & 5 & 13 & 8 \\
					\hline
			\end{tabular}}
		\end{table}	
	\end{proof}
	
	\subsection{\texorpdfstring{$\overrightarrow{C _4 }$}{C_4}-irregular graph of order 10}
	
	\begin{dfn}We define the graph $\overrightarrow{B_{10}}$ (see Fig.~\ref{fig5}) of order $10$ by:
		$
		V(\overrightarrow{B_{10}})=\{1,2,\ldots,10\},$
		\begin{align*}A(\overrightarrow{B_{10}}) &=\{(1,2), (1,3), (1, 4), (2,3), (2,4), (3,4), (3,10), (4, 9), (5, 1), (5, 2),(5, 3),(5, 4)\}\\
			&\phantom{=}\cup \{(5,6), (5,7), (5,8), (5,10), (6,2), (6,3), (6, 4), (6, 7),(6, 8), (6,10), (7, 3)\}\\
			&\phantom{=}\cup  \{(7, 4),  (7, 8),  (7, 10),(8, 4), (8,10),  (9, 5),(9,6), (9,7),(9,8),(10,4),(10,9)\}.\end{align*}\end{dfn}
	\begin{figure}[h!]\small
		\centerline{\includegraphics[width=0.5\textwidth]{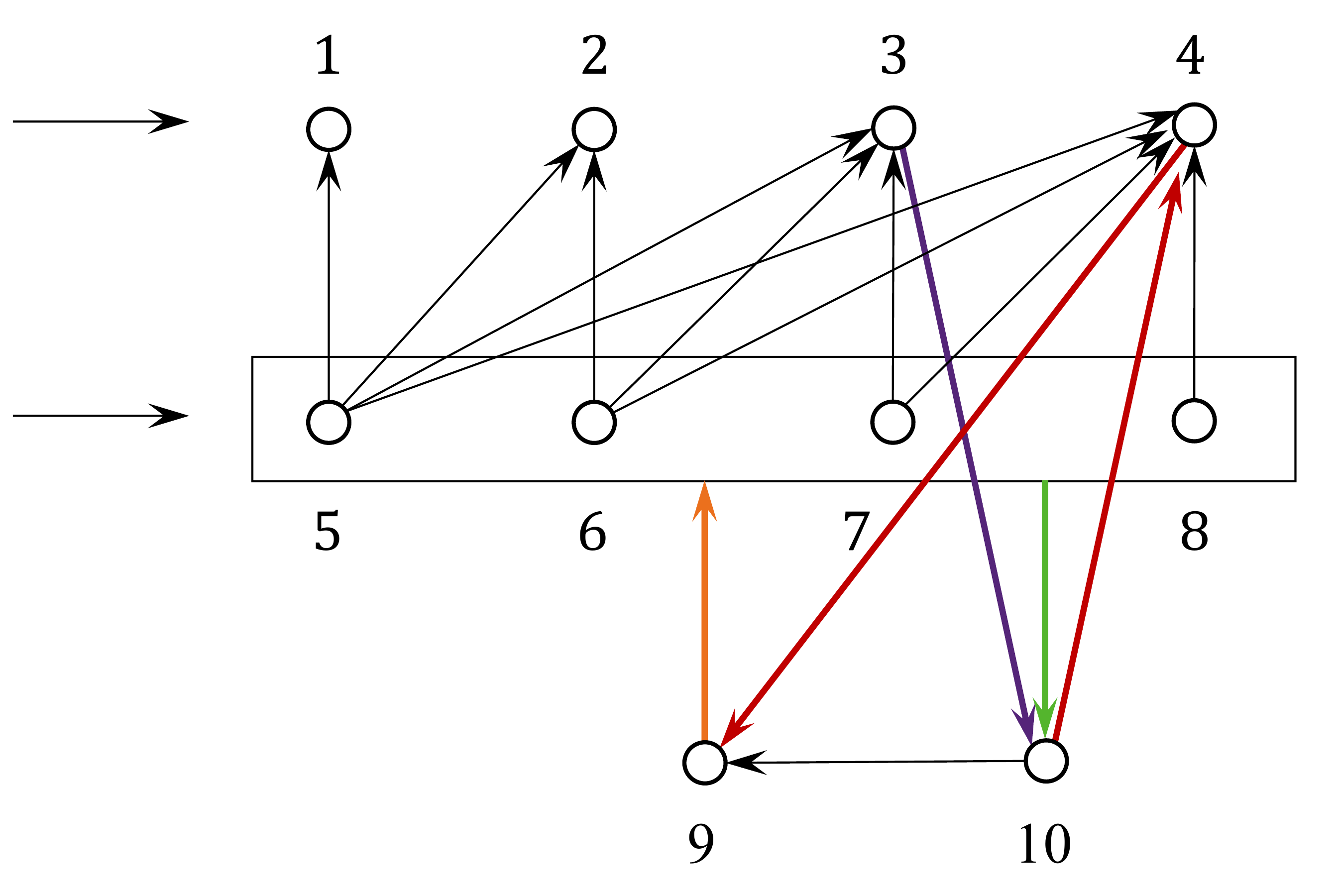}}	
		\caption{Graph $\overrightarrow{B_{10}}$.}
		\label{fig5}
	\end{figure}
	\begin{lemen}\phantomsection\label{lemma5}
		The graph $\overrightarrow{B_{10}}$ is $\overrightarrow{C _4 }$-irregular.
	\end{lemen}
	\begin{proof}
		Let's list all the quadrangles of the graph $\overrightarrow{B_{10}}$:
		$(1, 4, 9, 5)$, $(2, 4, 9, 5)$, $(2, 4, 9, 6)$, $(3, 4, 9, 5)$, $(3, 4, 9, 6)$, $(3, 4, 9, 7)$, $(3, 10, 9, 5)$, $(3, 10, 9, 6)$, $(3, 10, 9, 7)$,
		$(4, 9, 5, 6)$, $(4, 9, 5, 7)$, $(4, 9, 5, 8)$, $(4, 9, 5, 10)$, $(4, 9, 6, 7)$, $(4, 9, 6, 8)$, $(4, 9, 6, 10)$, $(4, 9, 7, 8)$, $(4, 9, 7, 10)$, $(4, 9, 8, 10)$, $(5, 6, 10, 9)$, $(5, 7, 10, 9)$,
		$(5, 8, 10, 9)$, $(6, 7, 10, 9)$, $(6, 8, 10, 9)$, $(7, 8, 10, 9)$. 
		
		From Table~\ref{dtabl3} it follows that $\overrightarrow{B_{10}}$ is a $\overrightarrow{C _4 }$-irregular graph.
		\begin{table} [ht]\small
			\caption{$\overrightarrow{C _4 }$-degrees of vertices in graph $\overrightarrow{B_{10}}$}\label{dtabl3}
			\centerline{\begin{tabular}{ | m{4.5em} | m{0.4cm}| m{0.4cm} |m{0.4cm} |m{0.4cm} |m{0.4cm} |m{0.4cm} |m{0.4cm} | m{0.4cm} | m{0.4cm} | m{0.4cm} |  }
					\hline
					vertex & 1 & 2 & 3 & 4 & 5 & 6 & 7 & 8 & 9 & 10 \\
					\hline
					$\overrightarrow{C_4}$-degree & 1 & 2 & 6 & 16 & 11 & 10 & 9 & 7 & 25 & 13 \\
					\hline
			\end{tabular}}
		\end{table}	
	\end{proof}
	
	\subsection{\texorpdfstring{$\overrightarrow{C_4}$}{C_4}-irregular graphs of even order greater than 10}
	In this subsection, we present a $\overrightarrow{C_4}$-irregular graph of order $k$ for every even $k \ge 12$.
	\smallskip
	
	\begin{dfn} Let $l \ge 5$ be an integer. Consider the graph $\overrightarrow{B_{2l+2}}$ (see Fig.~\ref{fig6}) defined by:
		\begin{align*}
			V(\overrightarrow{B_{2l+2}}) &= V_1 \cup V_2 \cup \{2l+1,2l+2\}, \ V_1=\{1,2,\dots,l\}, \ V_2=\{l+1,l+2,\dots,2l\}; \\[1ex]
			A(\overrightarrow{B_{2l+2}}) &= \{(i,j) \mid i,j \in V_1,i<j\} \cup \{(i,j) \mid i,j \in V_2,i<j\} \cup \{(2l+1,i) \mid i \in V_2\} \\[1ex]
			&\phantom{=} \cup \{(i,2l+2) \mid i \in V_2\} \cup \{(i,j) \mid i \in V_2,j \in V_1,i-j \leq l\} \\[1ex]
			&\phantom{=} \cup \{(l,2l+1),(2l+2,l),(2l+2,2l+1)\}.
	\end{align*}\end{dfn}
	\begin{figure}[h!]
		\centerline{\includegraphics[width=0.5\textwidth]{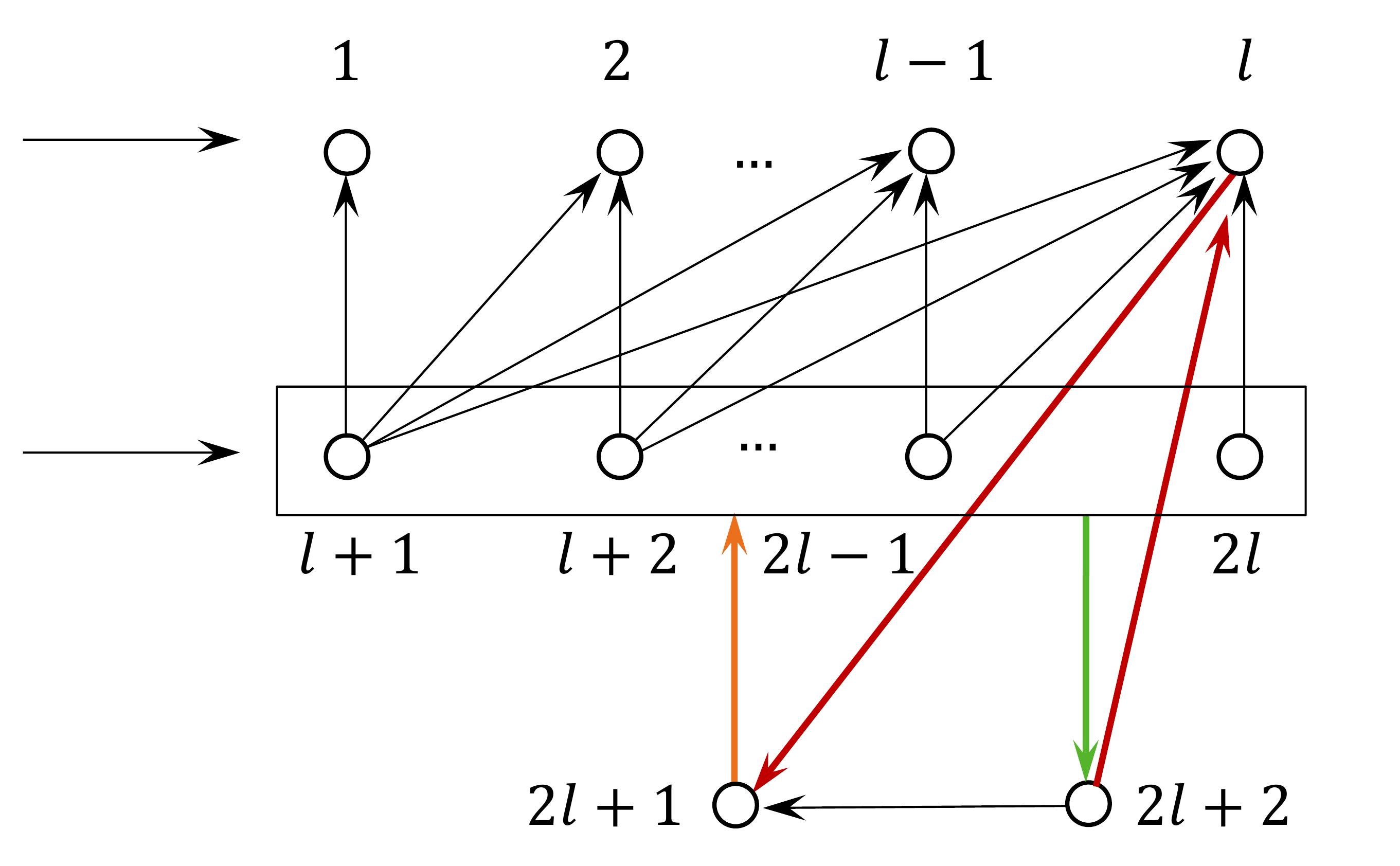}}	
		\caption{Graph $\overrightarrow{B_{2l+2}}$.}
		\label{fig6}
	\end{figure}
	\begin{dfn} Let $b_i = \overrightarrow{C_4}\deg_{\overrightarrow{B_{2l+2}}}(i)$ for all $i \in V(\overrightarrow{B_{2l+2}})$.\end{dfn}
	\begin{lemen}\phantomsection\label{lemma6}
		For each integer $l \ge 5$,  $\overrightarrow{C_4}$-degrees of vertices in   $\overrightarrow{B_{2l+2}}$ are equal to
		
		\smallskip
		$1) \ b_i=i\ \  \forall i \in \{1,2,\ldots,l-1\};$
		\smallskip	
		
		$2) \ b_l=l^2;$ \smallskip	
		
		$3) \  b_i= 4l-i-1\ \  \forall i \in \{l+1,l+2,\ldots,2l\};$
		\smallskip	
		
		$4) \ b_{2l+1}=l(3l-1)/2;$
		\smallskip
		
		$5) \ b_{2l+2}=l(l+1)/2.$
	\end{lemen}
	\begin{proof}
		We denote by $x,y,z$ the numbers of quadrangles in the graph $\overrightarrow{B_{2l+2}}$ that contain the arcs $(2l+2,l)$, $(2l+2,2l+1)$, and $(l,2l+1)$, respectively.
		
		Calculate $x,y,z$. First of all, we note that every quadrangle with the arc~$(2l+2,l)$ contains the arc $(l,2l+1)$ and has the form
		$(l,2l+1,u,2l+2)$, where $u\in \{l+1,l+2,\ldots,2l\}$. Thus, $x=l$.
		
		Furthermore, the set of all quadrangles with the arc $(2l+2,2l+1)$ is $\{(u,v,2l+2,2l+1)\}$, where $u,v\in \{l+1,l+2,\ldots,2l\}, u<v$.  It follows from this that $y=C_{l}^{2}= l(l-1)/2.$
		
		To find $z$, we partition the quadrangles  with the arc $(l, 2l+1)$ into three groups. The first group consists of $l$ quadrangles of the form $(l, 2l+1, u, 2l+2)$, where $u \in \{l+1, l+2,\dots, 2l\}$. The second group contains exactly $C_{l}^{2}= l(l-1)/2$ quadrangles of the form $(l, 2l+1, u, v)$, where $u, v \in \{l+1,l+2, \dots, 2l\}$, $u < v$. The third group contains quadrangles of the form $(u, l, 2l+1, v)$ with $u \in \{1, 2, \dots, l-1\}$, $v \in \{l+1, l+2, \dots, l+u\}$, and their number equals $\sum_{u=1}^{l-1} u = l(l-1)/2$. 
		Thus, $z =l + l(l-1)/2 + l(l-1)/2 = l^2$.
		
		Let us find $b_l$. It is clear that every quadrangle in  $\overrightarrow{B_{2l+2}}$ that contains vertex $l$ also has the arc $(l,2l+1)$ and vice versa. Therefore,
		$b_l=z =l^2.$
		
		Next we find $b_{2l+1}$. From the construction of the graph $\overrightarrow{B_{2l+2}}$, it follows that 
		\[
		b_{2l+1}=y+z =\frac {l(l-1)}{2}+l^2=\frac {l(3l-1)}{2}.
		\]
		
		Now let's calculate $b_{2l+2}$. It is easy to see that
		$$b_{2l+2}=x+y =l+\frac {l(l-1)}{2}=\frac {l(l+1)}{2}.$$
		
		Let us fix $i \in \{1,2,\ldots,l-1\}$. It is obvious that all quadrangles in $\overrightarrow{B_{2l+2}}$ with vertex~$i$ have the form $(i,l,2l+1,j)$, where $j \in \{l+1,l+2,\ldots,l+i\}$. From this we conclude that	
		$$b_i=i \quad \forall i \in \{1,2,\ldots,l-1\}.$$
		
		To complete the proof of Lemma~\ref{lemma6}, consider $i \in \{l+1,l+2,\ldots,2l\}$. In  $\overrightarrow{B_{2l+2}}$, one can distinguish exactly six types of quadrangles with vertex $i$.
		\begin{enumerate}[label=Type~\arabic*:, leftmargin=17mm]
			
			\item $(j,l,2l+1,i)$, $j \in \{i-l,i-l+1,\ldots,l-1\}$. Here and below we assume that the set $\{a, a+1,\ldots,b\}$ of consecutive vertices $a, a+1,\ldots,b$ in $\overrightarrow{ B_{2l+2}}$ is empty if $a > b$. There are $2l-i$  quadrangles of type 1.
			\smallskip
			
			\item  $(l,2l+1,j,i)$,
			$j \in \{l+1,l+2,\ldots,i-1\}$.  $\overrightarrow{B_{2l+2}}$ contains $i-l-1$ quadrangles of type 2.
			\smallskip
			
			\item  $(l,2l+1,i,j)$,
			$j \in \{i+1,i+2,\ldots,2l\}$.  We have $2l-i$ such quadrangles.
			\smallskip
			
			\item $(i,j,2l+2,2l+1)$,
			$j \in \{i+1,i+2,\ldots,2l\}$.  There are  $2l-i$ of them.
			\smallskip
			
			\item $(j,i,2l+2,2l+1)$,
			$j \in \{l+1,l+2,\ldots,i-1\}$.  We have  $i-l-1$ quadrangles of type 5.
			\smallskip
			
			\item  $(l,2l+1,i,2l+2)$. There is exactly one quadrangle of this type.\end{enumerate}
		
		Summing these contributions, we obtain:
		\[
		b_i = 3(2l-i) + 2(i-l-1) + 1 = 4l-i-1 \quad \forall i \in \{l+1, l+2, \dots, 2l\}. \qedhere
		\]
	\end{proof}
	\begin{theorem}\phantomsection\label{t2}
		For every integer $l \ge 5$, the graph $\overrightarrow{B_{2l+2}}$ is $\overrightarrow{C _4 }$-irregular.
	\end{theorem}
	\begin{proof}
		Let $l \ge 5$ be an integer. Let's consider the graph $\overrightarrow{B_{2l+2}}$. By Lemma~\ref{lemma6}, we infer that the $\overrightarrow{C_4 }$-degrees of the vertices in   $\overrightarrow{B_{2l+2}}$  are pairwise distinct: 	
		$$b_1<b_2<\ldots<b_{l-1}=l-1<2l-1=b_{2l}<b_{2l-1}<b_{2l-2}<\ldots< b_{l+1},$$		
		$$b_{l+1}=3l-2<\frac {l(l+1)}{2}=b_{2l+2}<b_l=l^2  <\frac{l(3l-1)}{2}=b_{2l+1}.$$	    	
		Hence, the graph $\overrightarrow{B_{2l+2}}$ is $\overrightarrow{C _4 }$-irregular.  
	\end{proof}
	\begin{cor}\phantomsection
		\label{cor3}
		The number of $\overrightarrow{C_4}$-irregular oriented graphs is infinite.
	\end{cor}
	
	\subsection{Criterion for the existence of a \texorpdfstring{$\overrightarrow{C _4 }$}{C_4}-irregular graph of order $k$}
	
	\begin{theorem}\phantomsection\label{t3}
		There exists a nontrivial $\overrightarrow{C_4}$-irregular graph of order $k$ if and only if $k$ is an integer and $k \ge 7$.
	\end{theorem}
	\begin{proof}
		\emph {Sufficiency.} For $k=7$ and any even $k \ge 8$, by Lemmas \ref{lemma3}--\ref{lemma5} and Theorem~\ref{t2}, there exists a $\overrightarrow{C_4}$-irregular graph $\overrightarrow{B_k }$ of order $k$. It is easy to see that all vertices of such graphs have positive $\overrightarrow{C_4}$-degrees. Therefore, a $\overrightarrow{C_4}$-irregular graph of odd order $k \ge 9$ can be constructed by adding an isolated vertex to the graph $\overrightarrow{B_{k-1}}$ . The sufficiency is proven.
		
		\emph {Necessity.}
		Let us assume the opposite, that there exists a non-trivial $\overrightarrow{C_4}$-irregular graph $H$ of order $k < 7$.  Clearly, $k \in \{4,5,6\}$.
		It is easily verified that any graph of order $4$ contains at most one quadrangle. Therefore, $k \ne 4$.
		
		If $k = 5$, then for each $v \in V(H)$, there are exactly four $4$-element subsets of $V(H)$ containing~$v$. Since the vertices of each such subset belong to at most one quadrangle in $H$, we have \mbox{$\overrightarrow{C_4}\deg_H(v) \leq 4$}. Therefore, in the $\overrightarrow{C_4}$-irregular graph $H$ of order $5$, there is a vertex $u$ with $\overrightarrow{C_4}\deg_H(u)=0$.
		Given that at most one quadrangle can be formed from the vertices of the set  $V(H)\setminus \{u\}$, we obtain that the $\overrightarrow{C_4}$-degrees of all vertices in $H$ are at most $1$. Contradiction.
		
		Finally, if $k=6$, consider two vertices, $x$ and $y$, in the graph $H$, with maximum and minimum $\overrightarrow{C_4}$-degrees $a$ and $b$, respectively. It is clear that $a \ge b+5$. Next, remove vertex $y$ from $H$, and let the resulting graph of order $5$ be denoted by $H_1$. In this case, the total number of quadrangles decreases by exactly $b$. Therefore, $\overrightarrow{C_4}\deg_{H_1}(x) \ge a-b \ge 5$. However, as previously noted, the $\overrightarrow{C_4}$-degree of a vertex in a graph of order $5$ cannot exceed $4$. This yields a contradiction.
		
		From the foregoing, it follows that $k$ is an integer and $k \ge 7$. The necessity has been proven. 
	\end{proof}
	
	\section{On \texorpdfstring{$\overrightarrow{C _3 }$}{C_3}-irregular graphs}\label{dsec4}
	We continue our investigation by examining $\overrightarrow{C_3}$-irregular graphs. We address the question of what values the order of a $\overrightarrow{C_3}$-irregular graph can have.
	\begin{dfn}
		In this section, every graph isomorphic to $\overrightarrow{C _3}$ is called a triangle.\end{dfn}
	
	\subsection{\texorpdfstring{$\overrightarrow{C _3 }$}{C_3}-irregular graph of order 10}
	
	\begin{dfn} Consider the graph $\overrightarrow{D_{10}}$ with the set of vertices $V(\overrightarrow{D_{10}})=\{1,2,\ldots,10\}$ and the set of arcs  	
		\begin{align*}
			A(\overrightarrow{D_{10}})&=\{(1,3),(1,4),(1,9),(2,3),(2,4),(2,9),(3,8),(3,9),(3,10),(4,3),(4,9),(4,10)\}\\
			&\phantom{=} \cup \{(5,1),(5,2),(5,3),(5,4),(5,6),(6,2),(6,3),(6,4),(6,7),(7,3),(7,4),(7,5)\}\\
			&\phantom{=} \cup \{(8,4),(9,5),(9,6),(9,7),(9,8),(9,10),(10,1),(10,2),(10,5),(10,6)\}.
		\end{align*}
	\end{dfn}
	
	We depict graph $\overrightarrow{D_{10}}$ in Fig.~\ref{fig7}. In Section~\ref{dsec4}, we also use the notation from Remark~\ref{rem1}.
	\begin{figure}[h!]\small
		\centerline{\includegraphics[width=0.41\textwidth]{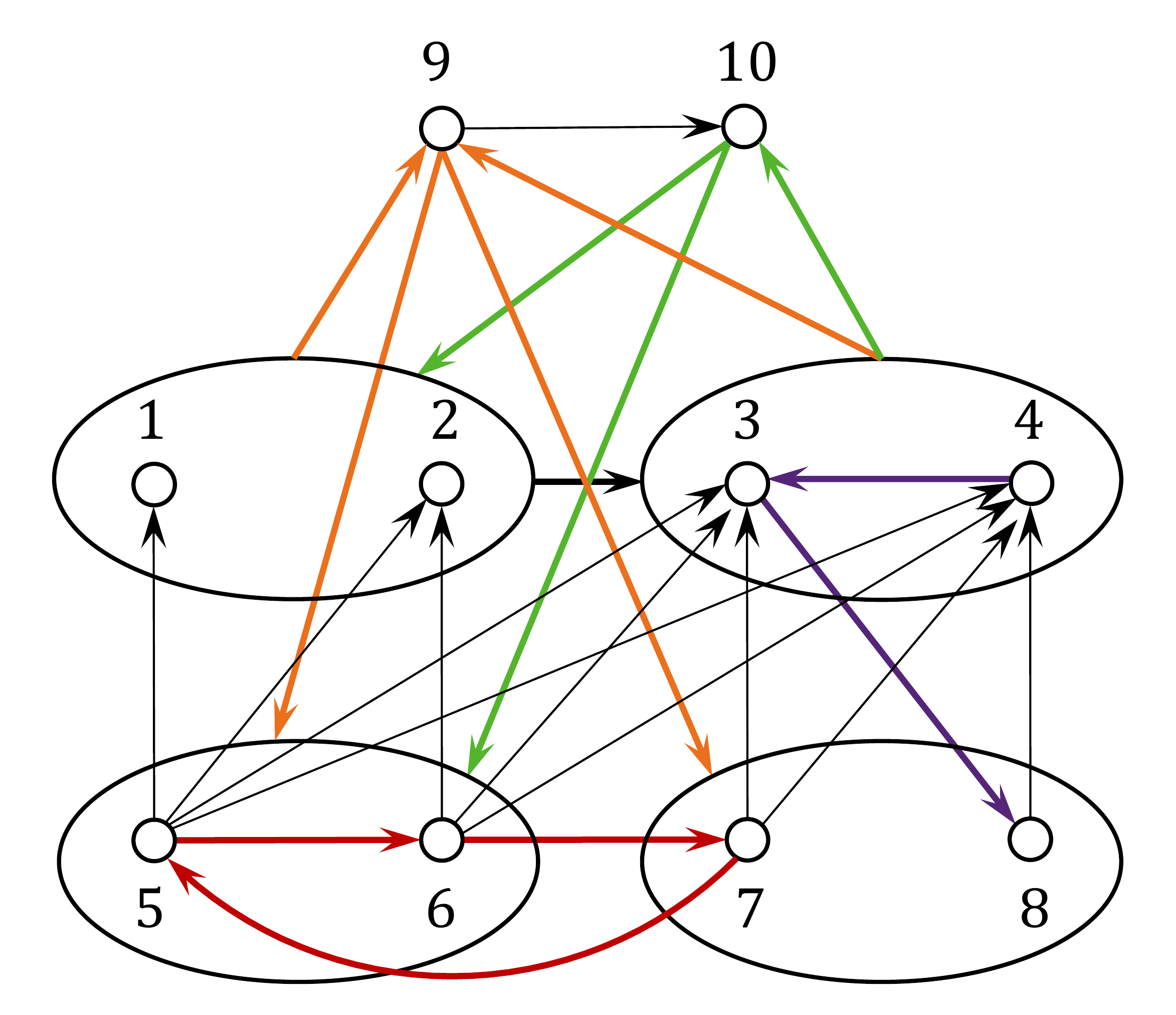}}	
		\caption{Graph $\overrightarrow{D_{10}}$.}
		\label{fig7}
	\end{figure}	
	\begin{lemen}\phantomsection\label{lemma7}
		The graph $\overrightarrow{D_{10}}$ is $\overrightarrow{C _3 }$-irregular.
	\end{lemen}
	\begin{proof}
		Let's list all the triangles in  $\overrightarrow{D_{10}}$: $(1, 3, 10)$, $(1, 4, 10)$, $(1, 9, 5)$, $(1, 9, 10)$, $(2, 3, 10)$, $(2, 4, 10)$, $(2, 9, 5)$, $(2, 9, 6)$, $(2, 9, 10)$, $(3, 8, 4)$, $(3, 9, 5)$, $(3, 9, 6)$, $(3, 9, 7)$, $(3, 10, 5)$, $(3, 10, 6)$, $(4, 9, 5)$, $(4, 9, 6)$, $(4, 9, 7)$, $(4, 9, 8)$,
		$(4, 10, 5)$, $(4, 10, 6)$, $(5, 6, 7)$.
		
		Based on Table~\ref{dtabl4}, we conclude that  $\overrightarrow{D_{10}}$ is a $\overrightarrow{C _3 }$-irregular graph.
		\begin{table} [ht]\small
			\caption{$\overrightarrow{C _3 }$-degrees of vertices in graph $\overrightarrow{D_{10}}$}\label{dtabl4}
			
			\	
			\centerline{\begin{tabular}{ | m{4.5em} | m{0.4cm}| m{0.4cm} |m{0.4cm} |m{0.4cm} |m{0.4cm} |m{0.4cm} |m{0.4cm} | m{0.4cm} | m{0.4cm} | m{0.4cm} |   }
					\hline
					vertex & 1 & 2 & 3 & 4 & 5 & 6 & 7 & 8 & 9 & 10 \\
					\hline
					$\overrightarrow{C_3}$-degree & 4 & 5 & 8 & 9 & 7 & 6 & 3 & 2 & 12 & 10 \\
					\hline
			\end{tabular}}
		\end{table}	
	\end{proof}
	
	\subsection{\texorpdfstring{$\overrightarrow{C _3 }$}{C_3}-irregular graph of order 12}
	
	\begin{dfn} We define the graph $\overrightarrow{D_{12}}$ (see Fig.~~\ref{fig8}) as follows: $V(\overrightarrow{D_{12}})=\{1,2,\ldots,12\}$,	
		\begin{align*}
			A(\overrightarrow{D_{12}}) &= \{(1,3),(1,4),(1,9),(2,3),(2,4),(2,9),(3,4),(3,9),(3,10),(4,9),(4,10)\}\\
			&\phantom{=} \cup \{(4,12),(5,1),(5,2),(5,3),(5,4),(6,2),(6,3),(6,4),(7,3),(7,4),(8,4)\}\\
			&\phantom{=} \cup \{(9,5),(9,6),(9,7),(9,8),(9,11),(10,1),(10,2),(10,5),(10,6),(10,9)\}\\
			&\phantom{=} \cup \{(10,12),(11,10),(12,1),(12,2),(12,3),(12,5),(12,6),(12,7),(12,8)\}.
	\end{align*} \end{dfn}
\vspace{-1ex}
	\begin{figure}[h!]\centering\includegraphics[width=0.41\textwidth]{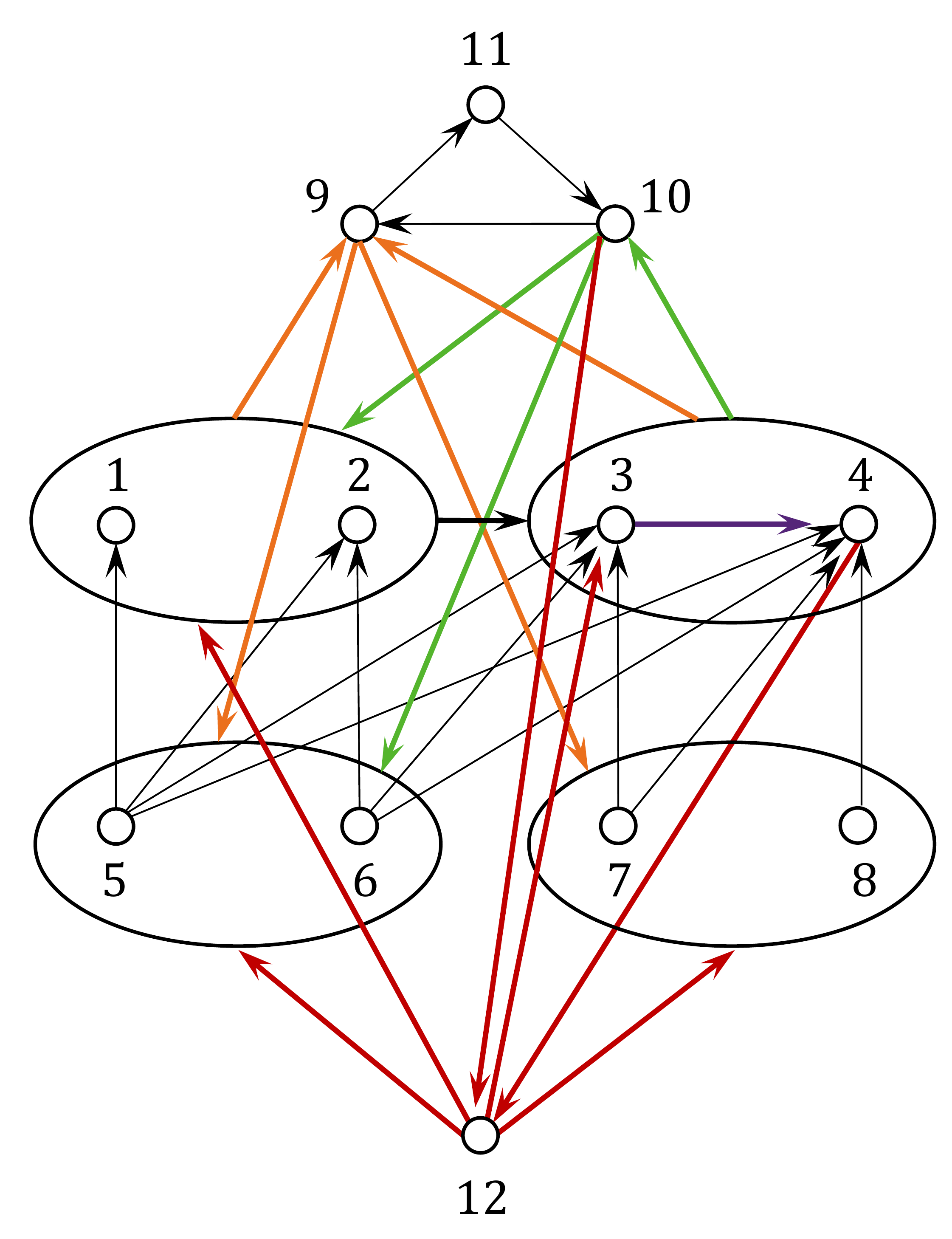}	
		\caption{Graph $\overrightarrow{D_{12}}$.}
		\label{fig8}
	\end{figure}
	\begin{lemen}\phantomsection\label{lemma8}
		The graph $\overrightarrow{D_{12}}$ is $\overrightarrow{C _3 }$-irregular.
	\end{lemen}
	\begin{proof}
		It is not difficult to verify that 	 $\overrightarrow{D_{12}}$ contains exactly $27$ triangles:
		$(1, 3, 10)$, $(1, 4, 10)$, $(1, 4, 12)$, $(1, 9, 5)$, $(2, 3, 10)$, $(2, 4, 10)$,
		$(2, 4, 12)$, $(2, 9, 5)$, $(2, 9, 6)$,
		$(3, 4, 12)$, $(3, 9, 5)$, $(3, 9, 6)$, $(3, 9, 7)$, $(3, 10, 5)$, $(3, 10, 6)$,
		$(3, 10, 12)$, $(4, 9, 5)$, $(4, 9, 6)$,
		$(4, 9, 7)$, $(4, 9, 8)$,  $(4, 10, 5)$,
		$(4, 10, 6)$, $(4, 12, 5)$,
		$(4,12,6)$, $(4,12,7)$, $(4,12,8)$,  $(9,11,10).$
		From Table~\ref{dtabl5} it follows that $\overrightarrow{D_{12}}$ is a $\overrightarrow{C _3 }$-irregular graph.
		\begin{table} [ht]\small
			\caption{$\overrightarrow{C _3 }$-degrees of vertices in graph $\overrightarrow{D_{12}}$}\label{dtabl5}
			
			\	
			
			\centerline{\begin{tabular}{ | m{4.5em} | m{0.4cm}| m{0.4cm} |m{0.4cm} |m{0.4cm} |m{0.4cm} |m{0.4cm} |m{0.4cm} | m{0.4cm} | m{0.4cm} | m{0.4cm} | m{0.4cm} | m{0.4cm} |  }
					\hline
					vertex & 1 & 2 & 3 & 4 & 5 & 6 & 7 & 8 & 9 & 10 & 11 & 12 \\
					\hline		
					$\overrightarrow{C_3}$-degree & 4 & 5 & 9 & 15 & 7 & 6 & 3 & 2 & 11 & 10 & 1 & 8 \\
					\hline
			\end{tabular}}
		\end{table}		
	\end{proof}
	
	\subsection{\texorpdfstring{$\overrightarrow{C _3 }$}{C_3}-irregular graphs of order $k \ge 14$,  $k \equiv 2 \, (\mathrm{mod} \ 4)$}
	In this subsection, for every integer $k \ge 14$ with $k \equiv 2 \pmod 4$, we construct a $\overrightarrow{C_3}$-irregular graph of order $k$.
	\begin{dfn} For an arbitrary integer $m \ge 3$, consider the graph $\overrightarrow{D_{4m+2}}$, which is shown in Fig.~\ref{fig9} and given by the following vertex and arc sets:
		\begin{align*}
			V(\overrightarrow{D_{4m+2}}) &= V_1\cup V_2\cup V_3\cup V_4\cup \{4m+1,4m+2\}, \text{ where} \\[1ex]
			&\begin{aligned}
				V_1&=\{1,2,\dots,m\}, & V_2&=\{m+1,m+2,\dots,2m\}, \\[0.8ex]
				V_3&=\{2m+1,2m+2,\dots,3m\}, \quad & V_4&=\{3m+1,3m+2,\dots,4m\};
			\end{aligned} \\[1.5ex]
			A(\overrightarrow{D_{4m+2}}) &= \{(i,j) \mid i \in V_1,j \in V_2\} \cup \{(i,j) \mid i-j \leq 2m,i \in (V_3 \cup V_4),j \in (V_1 \cup V_2)\} \\[1.2ex]
			&\phantom{=} \cup \{(i,4m+1) \mid i \in (V_1 \cup V_2)\} \cup \{(4m+1,i) \mid i \in (V_3 \cup V_4)\} \\[1.2ex]
			&\phantom{=} \cup \{(i,4m+2) \mid i \in V_2\} \cup \{(4m+2,i) \mid i \in (V_1 \cup V_3)\}.
	\end{align*}\end{dfn}
	
	\begin{figure}[h!]
		\centering\includegraphics[width=0.55\textwidth]{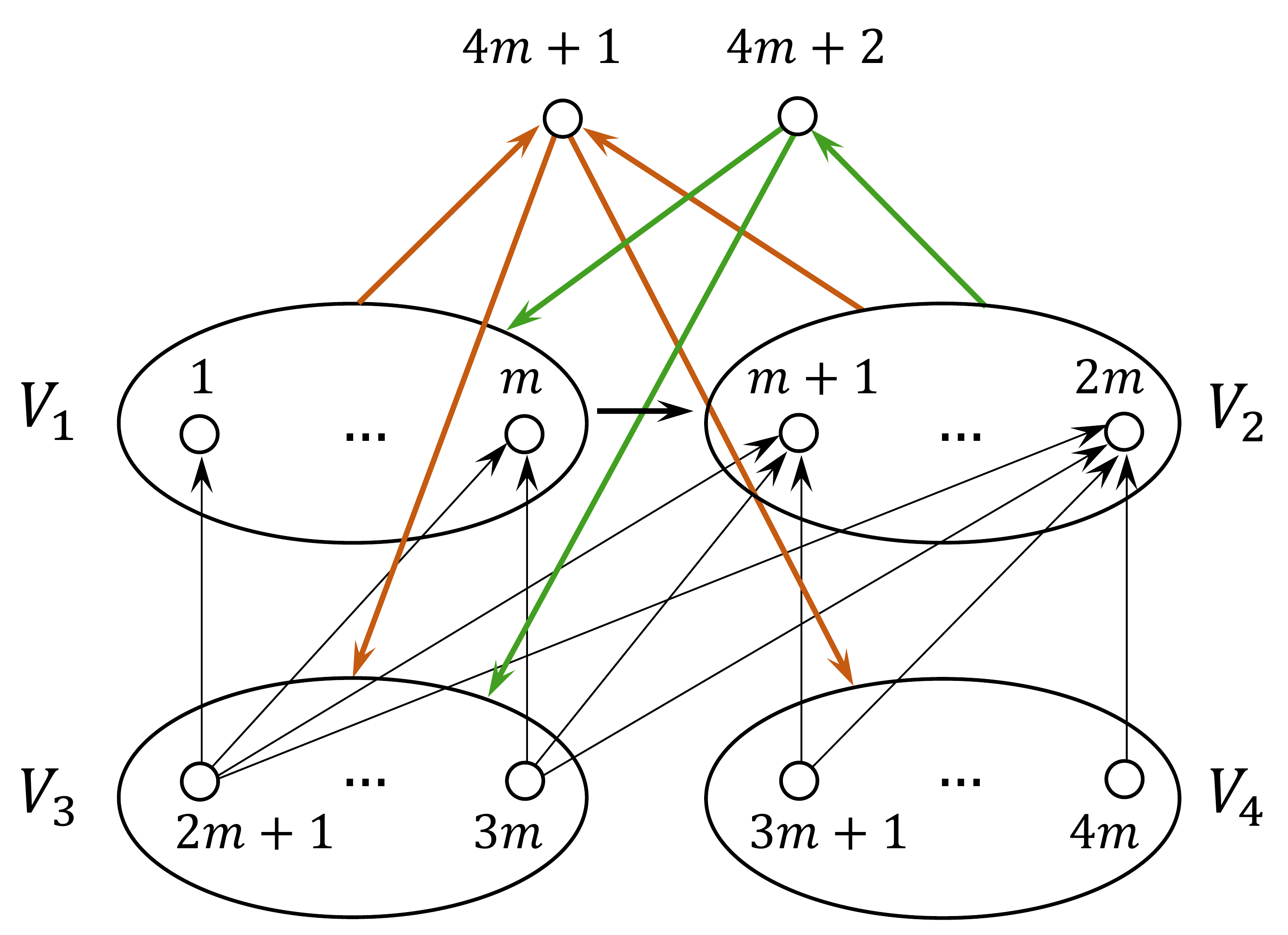}	
		\caption{Graph $\overrightarrow{D_{4m+2}}$.}
		\label{fig9}
	\end{figure}
	\begin{dfn} Let $d_i = \overrightarrow{C_3}\deg_{\overrightarrow{D_{4m+2}}}(i)$ for each $i \in V(\overrightarrow{D_{4m+2}})$.\end{dfn}
	
	\begin{lemen}\phantomsection\label{lemma9}
		For every integer  $m \ge 3$, the  $\overrightarrow{C_4}$-degrees of the vertices in  $\overrightarrow{D_{4m+2}}$ are equal to
		
		\smallskip
		$1) \ d_{i}= m+i\ \  \forall i \in V_1;$
		
		\smallskip	
		$2) \ d_{i}= 2m+i\ \  \forall i \in V_2;$
		
		\smallskip	
		$3) \ d_{i}=5m-i+1\ \  \forall i \in V_3;$
		
		\smallskip	
		$4) \ d_{i}=4m-i+1\ \  \forall i \in V_4;$
		
		\smallskip	
		$5) \ d_{4m+1}=2m^2+m;$
		
		\smallskip
		$6) \ d_{4m+2}=2m^2.$
	\end{lemen}
	\begin{proof}
		Let's fix an integer $m \ge 3$ and consider the graph $\overrightarrow{D_{4m+2}}$.
		
		First of all, note that each vertex $i \in V_1$ belongs to exactly two types of triangles in $\overrightarrow{D_{4m+2}}$: $(i, j, 4m+2)$, where $j \in V_2$, and $(i, 4m+1, j)$, where $j \in \{2m+1, 2m+2, \dots, 2m+i\}$. Thus,
		\[
		d_i = |V_2| + i = m + i \quad \forall i \in V_1.
		\]
		
		Next, let $i \in V_2$. Observe that $\overrightarrow{D_{4m+2}}$ contains exactly two types of triangles with vertex $i$: $(i, 4m+2, j)$, where $j \in (V_1 \cup V_3)$, and $(i, 4m+1, j)$, where $j \in \{2m+1, 2m+2, \dots, 2m+i\}$. Hence,
		\[
		d_i = |V_1 \cup V_3| + i = 2m + i \quad \forall i \in V_2.
		\]
		
		If $i \in V_3$, there are exactly two types of triangles in $\overrightarrow{D_{4m+2}}$ containing vertex $i$: $(i,j,4m+1)$, where $j \in \{i-2m,i-2m+1, \dots, 2m\}$, and $(i,j,2m+2)$, where $j \in V_2$. This yields
		\[
		d_i = 4m - i + 1 + |V_2| = 4m - i + 1 + m = 5m - i + 1 \quad \forall i \in V_3.
		\]
		
		Now consider $i \in V_4$. Since vertex $i$ in the graph $\overrightarrow{D_{4m+2}}$ belongs only to triangles of the form $(i, j, 4m+1)$, where $j \in \{i-2m, i-2m+1, \dots, 2m\}$, we have
		\[
		d_i = 4m - i + 1 \quad \forall i \in V_4.
		\]	
		
		Let's find $d_{4m+2}$. Since all triangles in the graph $\overrightarrow{D_{4m+2}}$ containing vertex $4m+2$ are of the form $(u,v,4m+2)$, where $u \in V_1 \cup V_3$ and $v \in V_2$, it follows that
		\[
		d_{4m+2}=|V_1 \cup V_3| \cdot |V_2|=2m \cdot m=2m^2.
		\]
		
		To find $d_{4m+1}$, we note that in the graph $\overrightarrow{D_{4m+2}}$, all triangles of the form $(u,4m+1,v)$, where
		$u \in (V_1 \cup V_2)$,  $v \in \{2m+1,2m+2,\ldots,2m+u\}$, and only these, contain vertex $4m+1$. Thus,  
		\[
		d_{4m+1} = \sum_{u=1}^{2m} u = 2m^2 + m. \qedhere
		\]	
	\end{proof}
	\begin{theorem}\phantomsection\label{t4}
		For every integer $m \ge 3$, the graph $\overrightarrow{D_{4m+2}}$ is $\overrightarrow{C _3 }$-irregular.
	\end{theorem}
	\begin{proof}
		Let's fix an integer $m \ge 3$ and consider the graph
		$\overrightarrow{D_{4m+2}}$.
		
		Next, for each $i \in \{1,2,3,4\}$,
		we denote by $X(V_i)$ the set of $\overrightarrow{C _3 }$-degrees of all vertices from $V_i$ in the graph $\overrightarrow{D_{4m+2}}$. From Lemma~\ref{lemma9}, it follows that 	
		\begin{gather*}
			\begin{aligned}
				X(V_4)&=\{1,2,\ldots,m\}, & X(V_1)&=\{m+1,m+2,\ldots,2m\},\\[0.5ex]
				X(V_3)&=\{2m+1,2m+2,\ldots,3m\},   &X(V_2)&=\{3m+1,3m+2,\ldots,4m\},
			\end{aligned} \\[0.5ex]
			4m < 2m^2 = d_{4m+2} < d_{4m+1} = 2m^2 + m
		\end{gather*}
		Thus, the $\overrightarrow{C _3 }$-degrees of all  vertices in  $\overrightarrow{D_{4m+2}}$ are distinct, and hence $\overrightarrow{D_{4m+2}}$  is $\overrightarrow{C _3 }$-irregular. 
	\end{proof}
	\begin{cor}\phantomsection\label{cor4}
		The number of $\overrightarrow{C_3}$-irregular oriented graphs is infinite.
	\end{cor}
	
	\subsection{\texorpdfstring{$\overrightarrow{C _3 }$}{C_3}-irregular graphs  of order $k \ge 16$, $k \equiv 0 \, (\mathrm{mod} \ 4)$ }
	We now exhibit a $\overrightarrow{C_3}$-irregular graph of order $k$ for each $k \ge 16$ divisible by $4$.
	\begin{dfn} Let $m \ge 3$ be an integer. We define the graph $\overrightarrow{D_{4m+4}}$ as follows:
		\begin{align*}
			V(\overrightarrow{D_{4m+4}}) &= V(\overrightarrow{D_{4m+2}}) \cup \{4m+3,4m+4\}, \\[1ex]
			A(\overrightarrow{D_{4m+4}}) &= A(\overrightarrow{D_{4m+2}}) \cup \{(i,2m) \mid i \in (V_2 \setminus \{2m\})\} \\[1ex]
			&\phantom{=} \cup \{(4m+3,i) \mid i \in (\{1,2,\dots,4m\} \setminus \{2m\})\} \cup \{(2m,4m+3)\} \\[1ex]
			&\phantom{=} \cup \{(2m-2,2m-1),(2m-1,4m+4),(4m+4,2m-2)\}.
	\end{align*}\end{dfn}
	
	The graph $\overrightarrow{D_{4m+4}}$ is illustrated in Fig.~\ref{fig10}. Note that in this case, we show  schematically the set of arcs
	$\{(i,j)\mid i-j\leq 2m, i \in (V_3 \cup V_4), j \in (V_1\cup V_2)\}$.
	\begin{figure}[h!]
		\centerline{
			\includegraphics[width=0.7\textwidth]{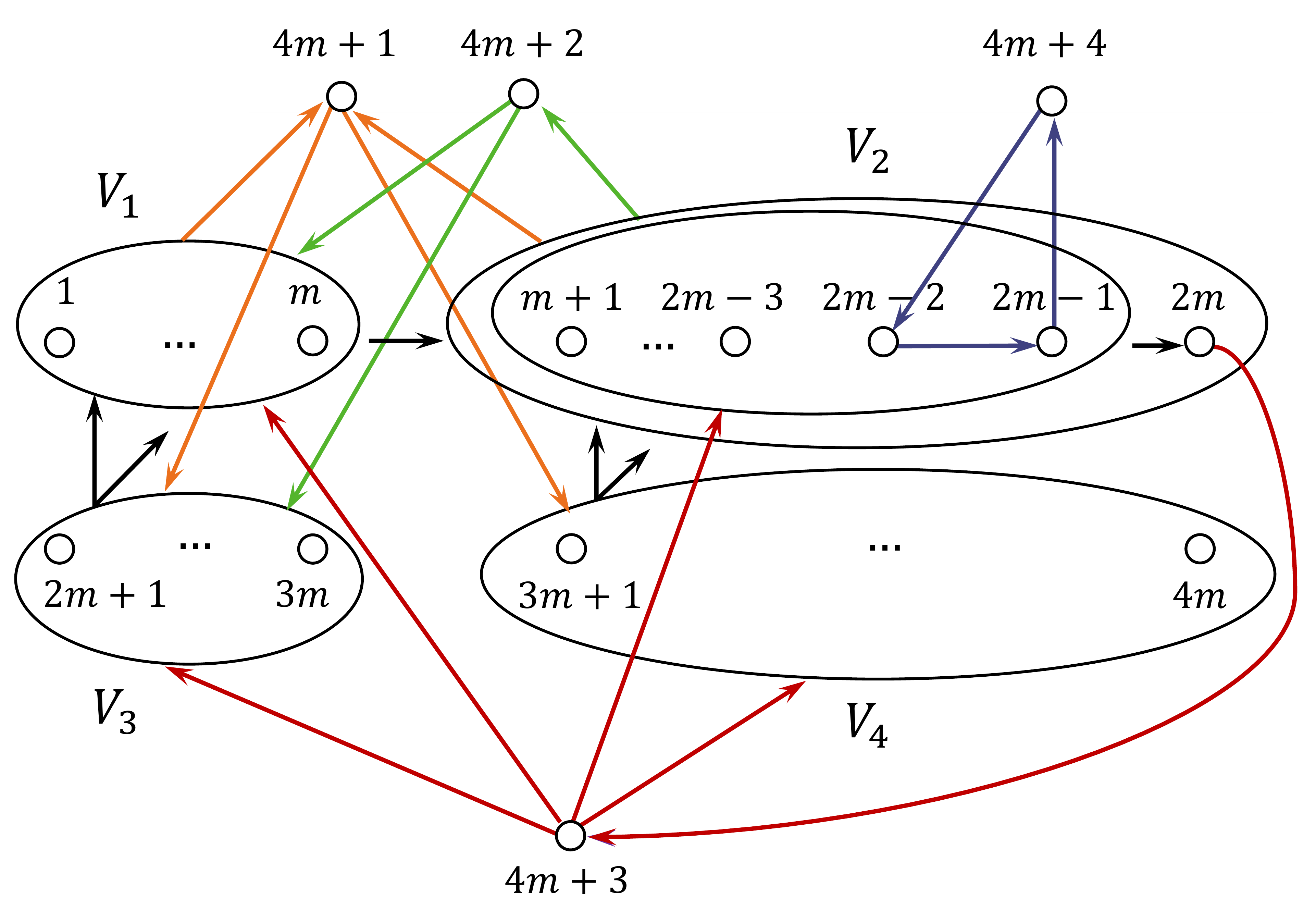}	}
		\caption{Graph $\overrightarrow{D_{4m+4}}$.}
		\label{fig10}
	\end{figure}
	
	\begin{dfn} Let $d_{i}^* = \overrightarrow{C_3}\deg_{\overrightarrow{D_{4m+4}}}(i)$ for all $i \in V(\overrightarrow{D_{4m+4}})$.\end{dfn}
	\begin{lemen}\phantomsection\label{lemma10}
		Let $m \ge 3$ be an integer. For $\overrightarrow{C_3}$-degrees of the vertices in   $\overrightarrow{D_{4m+4}}$, the following equalities are hold:
		
		\smallskip
		$1) \ d_{i}^{*}= d_{i}+1 \ \ \forall i \in \{1, 2,\ldots, 4m\} \setminus \{ 2m-2, 2m-1, 2m\};$
		
		\smallskip	
		$2) \ d_{i}^{*}= d_{i}+2\ \  \forall i \in \{2m-1, 2m-2\};$
		
		\smallskip	
		$3) \ d_{2m}^{*}=d_{2m}+4m-1;$
		
		\smallskip	
		$4) \ d_{i}^{*}= d_{i}\ \  \forall i \in \{4m+1, 4m+2\};$
		
		\smallskip	
		$5) \ d_{4m+3}^{*}=4m-1;$
		
		\smallskip
		$6) \ d_{4m+4}^{*}=1.$
	\end{lemen}
	\begin{proof}	
		By construction, every triangle in $\overrightarrow{D_{4m+2}}$ is also a triangle in $\overrightarrow{D_{4m+4}}$. Furthermore, $\overrightarrow{D_{4m+4}}$ contains exactly $4m$ new triangles that do not belong to $\overrightarrow{D_{4m+2}}$: $(2m-2, 2m-1, 4m+4)$, and  $4m-1$ triangles of the form $(i, 2m, 4m+3)$, where $i \in (\{1, 2, \ldots, 4m\} \setminus  \{2m\})$. This ensures the validity of Lemma~\ref{lemma10}.
	\end{proof}
	\begin{theorem}\phantomsection\label{t5}
		Let $m \ge 3$ be an integer. The graph $\overrightarrow{D_{4m+4}}$ is $\overrightarrow{C _3 }$-irregular.
	\end{theorem}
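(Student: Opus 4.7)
The plan is to combine Lemma~\ref{lemma9} with Lemma~\ref{lemma10}, obtain explicit closed-form expressions for every $d_i^*$, and then verify directly that the resulting $4m+4$ numbers are pairwise distinct. Since Lemma~\ref{lemma10} already isolates the handful of vertices whose $\overrightarrow{C_3}$-degree changes nontrivially when passing from $\overrightarrow{D_{4m+2}}$ to $\overrightarrow{D_{4m+4}}$, no deep argument is needed; the proof reduces to careful bookkeeping of four arithmetic progressions plus a short list of exceptional vertices.

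First I would compute what might be called the \emph{small-degree block}, consisting of all vertices whose $d_i^*$ turns out to be at most $4m+1$. Substituting Lemma~\ref{lemma9} into Lemma~\ref{lemma10} gives $d_i^*=4m-i+2$ on $V_4$, $d_i^*=m+i+1$ on $V_1$, $d_i^*=5m-i+2$ on $V_3$, and $d_i^*=2m+i+1$ on $V_2\setminus\{2m-2,2m-1,2m\}$. These four rules produce the consecutive ranges $\{2,...,m+1\}$, $\{m+2,...,2m+1\}$, $\{2m+2,...,3m+1\}$ and $\{3m+2,...,4m-2\}$ respectively, and together with $d_{4m+4}^*=1$, $d_{4m+3}^*=4m-1$, $d_{2m-2}^*=4m$ and $d_{2m-1}^*=4m+1$ (the last two coming from the $+2$ clause of Lemma~\ref{lemma10}) they will fill $\{1,2,...,4m+1\}$ bijectively.

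Next I would handle the \emph{large-degree block}: $d_{2m}^*=8m-1$, $d_{4m+2}^*=2m^2$ and $d_{4m+1}^*=2m^2+m$. For $m\ge 3$ each of these strictly exceeds $4m+1$, so they sit above the small block. Their pairwise distinctness reduces to three one-line arithmetic checks: $2m^2\ne 2m^2+m$ since $m\ne 0$, while $8m-1\ne 2m^2$ and $8m-1\ne 2m^2+m$ follow from the fact that $2m^2-8m+1=0$ and $2m^2-7m+1=0$ have no integer roots. Combining the two blocks then yields $4m+4$ pairwise distinct $\overrightarrow{C_3}$-degrees, as required.

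The main thing I expect to need to flag carefully is the boundary case $m=3$: there $V_2\setminus\{2m-2,2m-1,2m\}$ is empty (so one of the four arithmetic progressions above contributes nothing), and moreover the relative order of the three large values flips, with $2m^2<2m^2+m<8m-1$ instead of the ordering $8m-1<2m^2<2m^2+m$ that holds for $m\ge 4$. Neither phenomenon affects the conclusion, but I would state them explicitly so the reader does not have to wonder whether the argument degenerates at its smallest instance.
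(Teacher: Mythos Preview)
Your proposal is correct and follows essentially the same route as the paper: both arguments substitute Lemma~\ref{lemma9} into Lemma~\ref{lemma10}, observe that the vertices outside $\{2m,4m+1,4m+2\}$ realize exactly the integers $1,\dots,4m+1$, and then check that the three remaining values $8m-1$, $2m^2$, $2m^2+m$ are distinct and larger than $4m+1$, with the $m=3$ ordering flip noted explicitly. The only cosmetic difference is that the paper verifies distinctness of the three large values by exhibiting a strict ordering in each of the cases $m=3$ and $m\ge 4$, whereas you argue via the non-integrality of the roots of $2m^2-8m+1$ and $2m^2-7m+1$; both work.
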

	\begin{proof}
		For each $H \subseteq V(\overrightarrow{D_{4m+4}})$,
		we denote by $X^*(H)$ the set of $\overrightarrow{C _3 }$-degrees of all vertices from $H$ in the graph $\overrightarrow{D_{4m+4}}$. From Lemmas~\ref{lemma9}, \ref{lemma10} it follows that
		\begin{gather*}
			d_{4m+4}^*=1, \\
			\begin{aligned}
				X^*(V_4)&=\{2,3,\ldots,m+1\},\\[0.3ex]
				X^*(V_1)&=\{m+2,m+3,\ldots,2m+1\},\\[0.3ex]
				X^*(V_3)&=\{2m+2,2m+3,\ldots,3m+1\},\\[0.3ex]
				X^*(\{ m+1, m+2, \ldots , 2m-3\})&=\{3m+2,3m+3,\ldots,4m-2\}\ \  \text{for} \ \  m \ge 4,\\[0.3ex]
				d_{4m+3}^*=4m-1, & \quad d_{2m-2}^*=4m, \quad\ \,   d_{2m-1}^*=4m+1,\\[0.3ex]
				d_{2m}^*=8m-1,  & \quad  d_{4m+2}^*=2m^2, \quad  d_{4m+1}^*=2m^2+m.
			\end{aligned}
		\end{gather*}
		
		Obviously, the vertices from the set $V(\overrightarrow{D_{4m+4}})\setminus \{ 2m, 4m+1, 4m+2\}$ have different $\overrightarrow{C_3 }$-degrees in $\overrightarrow{D_{4m+4}}$. In addition,
		$$
		d_i^*>d_j^* \ \  \forall i \in \{ 2m, 4m+1, 4m+2\},\quad    j \in V(\overrightarrow{D_{4m+4}})\backslash \{ 2m, 4m+1, 4m+2\}.
		$$
		
		Next, if $m=3$, then $d_{4m+2}^*<d_{4m+1}^*<d_{2m}^*.$ If $m \ge 4$, then $d_{2m}^*<d_{4m+2}^*<d_{4m+1}^*$.	
		From the above we conclude that the graph $\overrightarrow{D_{4m+4}}$ is $\overrightarrow{C _3 }$-irregular. 		
	\end{proof}
	
	\subsection{Criterion for the existence of a \texorpdfstring{$\overrightarrow{C _3 }$}{C_3}-irregular graph of order $k$}
	
	In this subsection, we will finally answer the question concerning the order of a $\overrightarrow{C_3}$-irregular graph (Theorem~\ref{t6}), and for this, we will need two additional lemmas.
	\begin{lemen}\phantomsection\label{lemma11}
		Let $G$ be a graph of order $k$ and $v \in V(G)$. Then
		\begin{equation*}
			\overrightarrow{C_3} \deg_G(v) \le \frac{1}{4}(k-1)^2.
		\end{equation*}	
	\end{lemen}
	\begin{proof} Note that any triangle in $G$ with a vertex $v$ has a pair of arcs $(u,v), (v,w)$, where $u,w \in V(G)$. On the other hand, for any two arcs $(u,v), (v,w) \in A(G)$, there is at most one triangle in $G$ containing these arcs. Therefore 	
		\begin{equation*}
			\overrightarrow{C_3} \deg_G(v) \le \deg_G^+(v) \cdot \deg_G^-(v) \le \deg_G^+(v)(k-1-\deg_G^+(v)) \le \frac{1}{4} (k-1)^2. \qedhere	
		\end{equation*}
	\end{proof}
	\begin{dfn} 
		For any graph $G$, let $S(G)$ denote the sum of the $\overrightarrow{C_3}$-degrees of all its vertices. For every positive integer $k$, let $S_k$ be the maximum value of $S(G)$ among all graphs $G$ of order $k$.
	\end{dfn}
	\begin{lemen}\phantomsection\label{lemma12} 	
		$S_5=15, \ S_6=24.$	
	\end{lemen}
	\begin{proof}
		Figure~\ref{fig11} shows the graphs $G_1$ and $G_2$ of order $5$ and $6$, respectively, with $S(G_1)=15$,  $S(G_2)=24$. Hence, it suffices to verify that $S_5 \leq 15$ and $S_6 \leq 24$.
		\begin{figure}[h!]
			\centering\includegraphics[width=7cm]{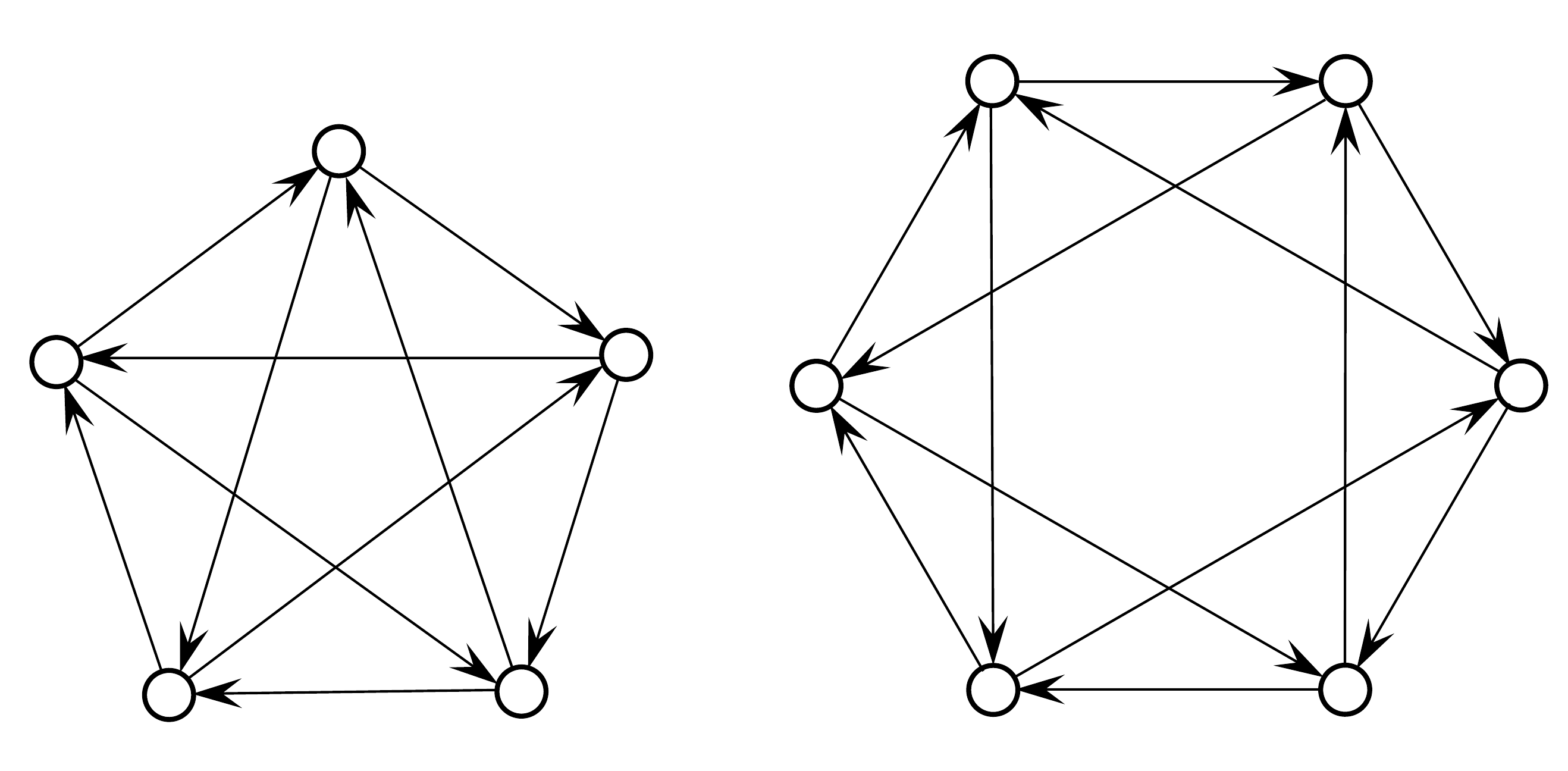}	
			\caption{Graph $G_1$ with $|G_1|=5$,
				$S(G_1)=15$ and graph $G_2$ with $|G_2|=6$,
				$S(G_2)=24$.}
			\label{fig11}
		\end{figure}		
		
		To prove $S_5 \leq 15$, assume for contradiction that there exists a graph $L$ of order $5$ with $S(L) > 15$.  Then $L$ has a vertex $v$ with $\overrightarrow{C _3} \deg_L (v) \ge 4$, otherwise  $S(L) \leq 5 \cdot 3=15$. 
		Since $\overrightarrow{C_3} \deg_L(v) \leq 4$ by Lemma~\ref{lemma11}, we have $\overrightarrow{C_3} \deg_L(v) = 4$, which implies $\deg_L^+(v) = \deg_L^-(v) = 2$. Let $(v,a), (v,b), (c,v), (d,v) \in A(L)$. Then $(a,c), (a,d), (b,c), (b,d) \in A(L)$, and the subgraph of $L$ induced by the set of vertices $\{a, b, c, d \}$ is triangle-free. Thus, $L$ contains exactly $4$ triangles, i.e., $S(L)=4 \cdot 3=12$. This contradicts the assumption $S(L) > 15$, showing that $S_5 \leq 15$.
		
		To prove $S_6 \leq 24$, assume $S(M) > 24$ for some graph $M$ of order 6. Then $M$ has a vertex $u$ with $\overrightarrow{C_3}\deg_M(u) \ge 5$, otherwise $S(M) \leq 6 \cdot 4 = 24$. Thus, $\{\deg_M^+(u), \deg_M^-(u)\} = \{2, 3\}$. Without loss of generality,  let $\deg_M^+(u)=3$ and $\deg_M^-(u)=2$. With $(u,a), (u,b), (u,c), (d,u), (e,u) \in A(M)$, assume for definiteness that $(a,d), (a,e), (b,d), (b,e), (c,d) \in A(M)$. Then all triangles in $M$, except $(a,d,u), (a,e,u), (b,d,u), (b,e,u), (c,d,u)$, belong to the set	
		\[
		A = \{(a,b,c), (a,c,b), (a,e,c), (b,e,c), (c,d,e), (c,e,u)\}.
		\]	
		
		It is clear that in each of the three pairs of triangles $\{(a,c,b), (a,e,c)\}$, $\{(a,b,c), (b,e,c)\}$, and $\{(c,d,e), (c,e,u)\}$, at most one triangle belongs to $M$. Consequently, $A$ contains at most three triangles of $M$, and so $M$ contains at most $5 + 3 = 8$ triangles in total. Thus,  $S(M) \leq 8 \cdot 3=24$. This contradicts the inequality $S(M)>24$, completing the proof for $S_6 \leq 24$. 
	\end{proof}
	\begin{theorem}\phantomsection\label{t6}
		There exists a non-trivial  $\overrightarrow{C_3}$-irregular graph of order $k$ if and only if $k$ is an integer and $k \ge 10$.
	\end{theorem}
	\begin{proof}
		\emph {Sufficiency.}
		For any even $k \geq 10$, according to Lemmas~\ref{lemma7}, \ref{lemma8}, Theorems~\ref{t4}, \ref{t5}, and their proofs, there exists a $\overrightarrow{C_3}$-irregular graph $\overrightarrow{D_k}$ of order $k$ with positive $\overrightarrow{C_3}$-degrees of vertices. Adding an isolated vertex to all such graphs yields $\overrightarrow{C_3}$-irregular graphs of any odd order, starting from $11$. Sufficiency is proven.
		
		\emph {Necessity.}
		Let's consider any non-trivial $\overrightarrow{C_3}$-irregular graph $G$ of order $k$. It is clear that $k$ is an integer and $k \ge 4$.  Suppose, by way of contradiction, that  $k \leq 9$. Let $H$ be the graph of order $m \in \{3,4,\ldots,9\}$  obtained from $G$ by removing a vertex with $\overrightarrow{C _3}$-degree equal to $0$ (if no such vertex exists, then $H=G$). By construction, the graph $H$ is $\overrightarrow{C_3}$-irregular and has positive $\overrightarrow{C _3}$-degrees of vertices, which we denote  as $a_m>a_{m-1}>\ldots>a_1>0$.
		It is easy to see that $a_m \ge m$. On the other hand, Lemma~\ref{lemma11} implies $a_m \leq (m-1)^2/4$. 
		As a consequence, $(m-1)^2/4 \ge m$, so $m \in \{6,7,8,9\}$. We now proceed to investigate all $4$ possible cases, categorized by the value~of~$m$.
		
		If $m=6$, then $a_2 \ge a_1+1, a_3\ge a_1+2,\ldots, a_6 \ge a_1+5$, and therefore $S(H) \ge 6a_1+15.$	
		Next, we consider the graph $H_1$ of order $5$, formed from $H$ by removing a vertex with $\overrightarrow{C_3}$-degree equal to~$a_1$. The following estimate contradicts $S_5 = 15$ from Lemma~\ref{lemma12}:
		\[
		S_5 \ge S(H_1) = S(H) - 3a_1 \ge 3a_1 + 15 \ge 18.
		\]
		
		If $m=7$, then
		$a_2 \ge a_1+1, a_3\ge a_1+2,\ldots, a_7\ge a_1+6$, and $S(H) \ge 7a_1+21.$	
		Now, remove from $H$ a vertex with  $\overrightarrow{C_3}$-degree equal to $a_1$. Let the resulting graph of order $6$ be denoted by  $H_2$. 
		Since $S_6 = 24$ by Lemma~\ref{lemma12}, we obtain a contradiction:
		\[
		S_6 \ge S(H_2)=S(H)-3a_1 \ge 4a_1+21 \ge 25.
		\]
		
		If $m=8$, then
		$a_2 \ge a_1+1, a_3\ge a_2+1, a_4\ge a_2+2,…, a_8 \ge a_2+6$, and therefore the inequality $S(H) \ge a_1+7a_2+21$ holds.
		Let $H_3$ be the graph of order $6$  obtained from $H$ by removing the vertices with $\overrightarrow{C _3}$-degrees $a_1, a_2$. The following estimate contradicts Lemma~\ref{lemma12}:
		\[S_6 \ge S(H_3) \ge S(H)-3a_1-3a_2 \ge  4a_2-2a_1+21 \ge 2a_1+25 \ge 27.\]	
		
		If $m=9$, then
		$a_2 \ge a_1+1, a_3 \ge a_2+1,  a_4 \ge a_3+1, a_5 \ge a_3+2,\ldots, a_9 \ge a_3+6$. It follows that  $S(H) \ge a_1+a_2+7a_3+21$. Consider the graph $H_4$ of order $6$, obtained from $H$ by removing the three vertices with $\overrightarrow{C _3}$-degrees  $a_1, a_2$, $a_3$.   As before, we reach a contradiction with the equality $S_6=24$ from Lemma~\ref{lemma12}:			
		$$S_6 \ge S(H_4) \ge S(H)-3a_1-3a_2-3a_3 \ge 4a_3-2a_2-2a_1+21 \ge
		2a_2-2a_1+25 \ge 27.$$
		
		Thus, for every positive integer $k \leq 9$, there does not exist any non-trivial $\overrightarrow{C_3}$-irregular graph of order $k$. The necessity is proven.	
	\end{proof}
	
	\section{Main result}\label{dsec5}
	From Corollaries~\ref{cor2}, \ref{cor3}, and \ref{cor4}, we obtain the main result of our work, Theorem~\ref{t7}, that confirms Conjecture~\ref{con3} for every oriented cycle $\overrightarrow{C _n}$.
	\begin{theorem}\phantomsection\label{t7}
		For any integer $n\ge3$, there exists an infinite family of $\overrightarrow{C _n}$-irregular oriented graphs.
	\end{theorem}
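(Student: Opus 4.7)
The plan is to partition the range of $n$ into three cases and invoke one infinite family already constructed in each section: $n \ge 5$ handled by the graphs $\overrightarrow{A_{2l+2,\,n}}$ from Section 2, $n = 4$ handled by the graphs $\overrightarrow{B_{2l+2}}$ from Section 3, and $n = 3$ handled by the graphs $\overrightarrow{D_{4m+2}}$ (and $\overrightarrow{D_{4m+4}}$) from Section 4. Since each of these families produces graphs of strictly increasing order, any one of them alone furnishes an infinite collection of pairwise non-isomorphic $\overrightarrow{C_n}$-irregular oriented graphs.

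First I would cite Corollary \ref{cor2} to cover $n \ge 5$. This corollary is exactly the statement needed, and it rests on Theorem \ref{t1}, which requires the side condition that $l - 1$ is not divisible by $n - 3$. I would briefly remark that for any fixed $n \ge 5$, the set of positive integers $l$ with $l - 1 \not\equiv 0 \pmod{n-3}$ is infinite (in fact has density $(n-4)/(n-3)$), so infinitely many admissible $l$ exist, yielding graphs $\overrightarrow{A_{2l+2,\,n}}$ of unbounded order $2l+2$.

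Next I would handle $n = 4$ by appealing to Corollary \ref{cor3}, which in turn follows from Theorem \ref{t2}: the graph $\overrightarrow{B_{2l+2}}$ is $\overrightarrow{C_4}$-irregular for every integer $l \ge 5$, giving an infinite family indexed by $l$. Finally, for $n = 3$, I would invoke Corollary \ref{cor4}, which was proved via Theorem \ref{t4}: the graph $\overrightarrow{D_{4m+2}}$ is $\overrightarrow{C_3}$-irregular for every integer $m \ge 3$, again an infinite family of unbounded order.

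There is essentially no technical obstacle remaining at this stage — all of the combinatorial work (counting the relevant $[i,j]$-contours, verifying inequalities between $\overrightarrow{C_n}$-degrees of vertices in different blocks $V_1, V_2$, handling the auxiliary vertices $2l+1$, $2l+2$, $4m+1$, $4m+2$) was completed in Lemmas \ref{lemma1}, \ref{lemma2}, \ref{lemma6}, and \ref{lemma9}, together with Theorems \ref{t1}, \ref{t2}, and \ref{t4}. The only conceptual point worth emphasizing in the write-up is that the three construction schemes are genuinely different (one for $n \ge 5$, a separate ad hoc family for $n = 4$ because the general construction degenerates, and yet a different layered family for $n = 3$), but they cover all $n \ge 3$ without overlap, so the union of the three corollaries gives precisely the conclusion of Theorem \ref{t7}.
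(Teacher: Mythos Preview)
Your proposal is correct and matches the paper's own proof exactly: the paper simply states that Theorem~\ref{t7} follows from Corollaries~\ref{cor2}, \ref{cor3}, and~\ref{cor4}, which is precisely the three-case split ($n\ge5$, $n=4$, $n=3$) you describe. Your additional remarks about the density of admissible $l$ and the unbounded orders are fine elaborations but not strictly needed, since the corollaries already assert infinitude.
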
	
	
	\phantomsection
	\pdfbookmark[1]{References}{References}
	
	\end{document}